\newtheorem{theorem}{Theorem}[section]
\newtheorem{proposition}[theorem]{Proposition}
\newtheorem{corollary}[theorem]{Corollary}
\newtheorem{lemma}[theorem]{Lemma}
\theoremstyle{definition}
\newtheorem{definition}[theorem]{Definition}
\newtheorem{remark}[theorem]{Remark}
\newtheorem{example}[theorem]{Example}
\def\NN{\ensuremath{\mathbb{N}}}
\def\RR{\ensuremath{\mathbb{R}}}
\newcommand{\V}{{\mathcal V}}
\def\K{\ensuremath{\mathcal{K}}}
\def\C{\ensuremath{\mathcal{C}}}
\def\rank{\ensuremath{\textup{rank}}}
\def\min{\ensuremath{\textup{min}}}
\def\supp{\ensuremath{\textup{supp}}}
\def\conv{\ensuremath{\textup{conv}}}
\def\ext{\ensuremath{\textup{ext}}}
\def\ker{\ensuremath{\textup{ker}}}
\def\PSD{\ensuremath{\mathcal{S}_{+}}}
\def\STAB{\ensuremath{\textup{STAB}}}
\def\Aut{\ensuremath{\textup{Aut}}}
\def\GL{\ensuremath{\textup{GL}}}
\def\diag{\ensuremath{\textup{diag}}}
\def\TH{\ensuremath{\textup{TH}}}
\title{Lifts of Convex Sets and Cone Factorizations} 
\author{Jo{\~a}o Gouveia}
\address{CMUC, Department of Mathematics,
  University of Coimbra, 3001-454 Coimbra, Portugal}
\email{jgouveia@mat.uc.pt} 
\author{Pablo A. Parrilo}
\address{Department of Electrical Engineering and Computer Science,
  Laboratory for Information and Decision Systems, Massachusetts
  Institute of Technology, 77 Massachusetts Avenue, Cambridge, MA
  02139-4307, USA} 
\email{parrilo@mit.edu} 
\author{Rekha Thomas}
\address{Department of Mathematics, University of Washington, Box
  354350, Seattle, WA 98195, USA} \email{thomas@math.washington.edu}
\thanks{All authors were partially supported by grants from the U.S. National Science Foundation.
Gouveia was also supported by Funda{\c c}{\~ a}o para a Ci{\^ e}ncia e Tecnologia.}  
\date{\today}
\begin{document}

\begin{abstract}
In this paper we address the basic geometric question of when a given convex set is the image under a linear map of an affine slice of a given closed convex cone. Such a  representation or ``lift'' of the convex set is especially useful if the cone admits an efficient algorithm for linear optimization over its affine slices.  
We show that the existence of a lift of a convex set to a cone is equivalent to the existence of a factorization 
of an operator associated to the set and its polar via elements in the cone and its dual. This generalizes a theorem of Yannakakis that established a connection between polyhedral lifts of a polytope and nonnegative factorizations of its slack matrix. Symmetric lifts of convex sets can also be characterized similarly.
When the cones live in a family, our results lead to the definition of the rank of a convex set with respect to this family. We present results about this rank in the context of cones of positive semidefinite matrices. Our methods provide new tools for understanding 
cone lifts of convex sets.
\end{abstract}

\maketitle

\section{Introduction} 
\label{sec:intro}

Linear optimization over convex sets plays a central role in
optimization. In many instances, a convex set $C \subset \RR^n$ may
come with a complicated representation that cannot be altered if one
is restricted in the number of variables and type of representation
that can be used. For instance, the $n$-dimensional cross-polytope
$$C_n := \{ x \in \RR^n \,:\, \pm x_1 \pm x_2 \cdots \pm x_n \leq 1
\}$$ requires the above $2^n$ constraints in any representation of it
by linear inequalities in $n$ variables.  However, $C_n$ is the
projection onto the $x$-coordinates of the polytope
$$ Q_n := \{ (x,y) \in \RR^{2n} \,:\, \sum_{i=1}^{n} y_i = 1, -y_i
\leq x_i \leq y_i \,\forall \, i=1, \ldots, n \}$$ which is described
by $2n+1$ linear constraints and $2n$ variables, and one can optimize
a linear function $\langle c, x \rangle$ over $C_n$ by instead
optimizing it over $Q_n$. Since the running time of linear programming
algorithms depends on the number of linear constraints of the feasible
region, the latter representation allows rapid optimization over
$C_n$. More generally, if a convex set $C \subset \RR^n$ can be
written as the image under a linear map of an affine slice of a cone
that admits efficient algorithms for linear optimization, then one can
optimize a linear function efficiently over $C$ as well. For instance,
linear optimization over affine slices of the $k$-dimensional
nonnegative orthant $\RR^k_+$ is linear programming, and over the cone
of $k \times k$ real symmetric positive semidefinite matrices
$\mathcal{S}^k_+ $ is semidefinite programming, both of which admit
efficient algorithms. Motivated by this fact, we ask the following
basic geometric questions about a given convex set $C \subset \RR^n$:

\begin{center}
\begin{enumerate}
  \item {\em Given a full-dimensional closed convex cone $K \subset
  \RR^m$, when does there exist an affine subspace $L \subset \RR^m$
 and a linear map $\pi \,:\, \RR^m \rightarrow \RR^n$ such that $C =
                          \pi (K \cap L)$?}
\item {\em If the cone $K$ comes from a family $( K_k )$ (e.g. $( \RR^k_+ )$ or $( \PSD^k )$), then what is the least $k$ for which $C = \pi(K_k \cap L)$ for some $\pi$ and $L$?}
\end{enumerate}
\end{center}

If $C = \pi(K \cap L)$, then $K \cap L$ is called a $K$-{\em
  lift} of $C$. In \cite{Yannakakis}, Yannakakis points out a remarkable
connection between the smallest $k$ for which a polytope has a $\RR^k_+$-lift 
and the {\em nonnegative rank} of its {\em slack matrix}.
The main result of our paper is an extension of
Yannakakis' result to the general scenario of $K$ being any closed
convex cone and $C$ any convex set, answering Question (1) above. The
main tool is a generalization of nonnegative factorizations of
nonnegative matrices to {\em cone factorizations} of {\em slack
  operators} of convex sets.

This paper is organized as follows. In Section~\ref{sec:mainthms} we
present our main result (Theorem~\ref{thm:general_Yannakakis})
characterizing the existence of a $K$-lift of a convex set $C \subset
\RR^n$, when $K$ is a full-dimensional closed convex cone in
$\RR^m$. A $K$-lift of $C$ is {\em symmetric} if it respects the
symmetries of $C$. In Theorem~\ref{thm:general_Yannakakis_symm}, we
characterize the existence of a {\em symmetric} $K$-{\em lift} of $C$. Although
symmetric lifts are quite special, they have received much
attention. The main result in \cite{Yannakakis} was that a symmetric
$\RR^k_+$-lift of the {\em matching polytope} of the complete graph on
$n$ vertices requires $k$ to be at least exponential in
$n$. Results in \cite{KaibelPashkovichTheis} and \cite{Pashkovich} 
have shown that symmetry imposes strong
restrictions on the minimum size of polyhedral
lifts. Proposition~\ref{prop:geometric operations} describes geometric
operations on convex sets that preserve the existence of cone lifts.

In Section~\ref{sec:polytopes} we focus on polytopes. As a corollary of
Theorem~\ref{thm:general_Yannakakis} we obtain
Theorem~\ref{thm:general_Yannakakis_for_polytopes} which generalizes
Yannakakis' result for polytopes \cite[Theorem~3]{Yannakakis} to
arbitrary closed convex cones $K$. We illustrate
Theorems~\ref{thm:general_Yannakakis_for_polytopes} and
\ref{thm:general_Yannakakis_symm} using polygons in the plane.

Section~\ref{sec:coneranks} tackles Question (2) and considers ordered
families of cones, $\K= (K_k)$, that can be used to lift a given $C
\subset \RR^n$, or more simply, to factorize a nonnegative matrix $M$.
When all faces of all cones in $\K$ are again in $\K$, we define
$\rank_{\K}(C)$ (respectively, $\rank_{\K}(M)$) to be the smallest $k$
such that $C$ has a $K_k$-lift (respectively, $M$ has a
$K_k$-factorization).  We focus on the case of $\K = ( \RR^k_+ )$ when
$\rank_{\K}(\cdot)$ is called nonnegative rank, and $\K = (\PSD^k)$
when $\rank_{\K}(\cdot)$ is called psd rank. Section~\ref{subsec:cone
  ranks defs} gives the basic definitions and properties of cone
ranks.  We find (different) families of nonnegative matrices that show
that the gap between any pair among: rank, psd rank and nonnegative
rank, can become arbitrarily large.  In Section~\ref{subsec:cone ranks
  of convex sets} we derive lower bounds on nonnegative and psd ranks
of polytopes. We note that the nonnegative rank of a polytope is
  also called the {\em extension complexity} of the polytope by some
  authors in reference to this invariant being the smallest $k$ for
  which the polytope admits a $\RR^k_+$-lift.
Corollary~\ref{cor:bounds on nonneg rank examples} shows a lower bound
for the nonnegative rank of a polytope in terms of the size of an
antichain of its face lattice. Corollary~\ref{cor:max facets with
  fixed psd rank} gives an upper bound on the number of facets of a
polytope with psd rank $k$. This subsection also finds families of
polytopes whose slack matrices exhibit arbitrarily large gaps between
rank and nonnegative rank, as well as rank and psd rank.

In Section~\ref{sec:applications} we give two applications of our
methods.  When $C = \STAB(G)$ is the {\em stable set polytope} of a
graph $G$ with $n$ vertices, Lov{\'a}sz constructed a convex
approximation of $C$ called the {\em theta body} of $G$. This body is
the projection of an affine slice of $\PSD^{n+1}$, and when $G$ is a
{\em perfect graph}, it coincides with $\STAB(G)$. Our methods show that 
this construction is optimal in the sense that for any $G$, $\STAB(G)$ cannot admit a
$\PSD^k$-lift for any $k \leq n$. A result of Burer
shows that every $\STAB(G)$ has a $\mathcal C^*_{n+1}$-lift where
$\mathcal C^*_{n+1}$ is the cone of {\em completely positive matrices} of
size $(n+1) \times (n+1)$. We illustrate Burer's result in terms of
Theorem~\ref{thm:general_Yannakakis} on a cycle of length five.  The
second part of Section~\ref{sec:applications} interprets
Theorem~\ref{thm:general_Yannakakis} in the context of {\em rational 
  lifts} of convex hulls of algebraic sets. We show in
Theorem~\ref{thm:moments} that in this case, the positive semidefinite factorizations
required by Theorem~\ref{thm:general_Yannakakis} can be interpreted in
terms of sums of squares polynomials and rational maps.

In the last few decades, several {\em lift-and-project} methods have
been proposed in the optimization literature that aim to provide
tractable descriptions of convex sets.  These
methods construct a series of nested convex approximations to $C \subset \RR^n$ that
arise as projections of higher dimensional convex sets. Examples 
can be found in \cite{Balas79, SheraliAdams90,LovaszSchrijver91, Lasserre1, Parrilo:spr, GPT1, KojimaTuncel} 
and \cite{BienstockZuckerberg}. In these methods, $C$ is either a $0/1$-polytope or more generally, the convex hull of a  semialgebraic set, and the cones that are used in the lifts are either nonnegative orthants or the cones of positive semidefinite matrices.  The success of a lift-and-project method relies on whether 
a lift of $C$ is obtained at some step of the procedure. 
Questions (1) and (2), and our answers to them, address this convergence question and offer a uniform framework within which to study all lift-and-project methods for convex sets using closed convex cones.

There have been several recent developments that were motivated by the results of Yannakakis in 
\cite{Yannakakis}. As mentioned earlier, Kaibel, Pashkovich and Theis proved that symmetry can impose severe 
restrictions on the minimum size of a polyhedral lift of a polytope. An exciting new result of Fiorini, Massar, Pokutta, Tiwary and de Wolf shows that there are {\em cut}, {\em stable set} and {\em traveling salesman} polytopes for which there can be no polyhedral lift of size polynomial in the number of vertices of the associated graphs. Their paper \cite{FMPTW} also gives an interpretation of positive semidefinite rank of a nonnegative matrix in terms of quantum communication complexity extending the connection between nonnegative rank and classical communication complexity established in \cite{Yannakakis}.

\section{Cone lifts of convex bodies} \label{sec:mainthms}

A convex set is called a {\em convex body} if it is compact and contains the 
origin in its interior. To simplify notation, we will assume throughout the paper that the
convex sets $C \subset \RR^n$ for which we wish to study cone lifts are 
all convex bodies, even though our results hold for all convex sets.
Recall that the {\em polar} of a convex set $C \subset \RR^n$ is the set
\[
C^\circ=\{y \in \RR^n: \langle x,y \rangle \leq 1, \ \ \forall x \in C \}.
\] 
Let $\ext(C)$ denote the set of {\em extreme points} of
$C$, namely, all points $p \in C$ such that if $p = (p_1 + p_2)/2$, with $p_1,p_2 \in C$, then 
$p=p_1=p_2$.  Since $C$ is compact with the origin in its interior, both $C$ and $C^\circ$ are convex hulls of their  
respective extreme points. Consider the operator $S:\RR^n\times \RR^n
\rightarrow \RR$ defined by $S(x,y)=1-\left<x,y\right>$. We define the {\em slack operator} $S_C$, of the convex set $C$, 
to be the restriction of $S$ to $\ext(C) \times \ext(C^\circ)$. 

\begin{definition} \label{def:K-lift}
Let $K \subset \RR^m$ be a full-dimensional closed convex cone and $C \subset \RR^n$ a full-dimensional 
convex body. A {\em $K$-lift} of $C$ is a set $Q=K \cap L$,
where $L \subset \RR^m$ is an affine subspace, and $\pi:\RR^m \rightarrow \RR^n$ is a linear map 
such that $C=\pi(Q)$. If
$L$ intersects the interior of $K$ we say that $Q$ is a {\em
  proper $K$-lift} of $C$.
\end{definition}

We will see that the existence of a $K$-lift of $C$ is intimately connected
to properties of the slack operator $S_C$. Recall that the {\em dual} of a 
closed convex cone $K \subset \RR^m$ is 
$$K^*=\{y \in \RR^m: \langle x,y \rangle \geq 0, \ \ \forall x \in K
\}.$$ A cone $K$ is \emph{self-dual} if $K^* = K$. In particular,
  the cones $\RR_+^n$ and $\PSD^k$ are self-dual.

\begin{definition}
Let $C$ and $K$ be as in Definition~\ref{def:K-lift}. We say that the slack operator 
$S_C$ is
\emph{$K$-factorizable} if there exist maps (not necessarily linear)
$$A:\ext(C)\rightarrow K \,\,\,\,\textup{and} \,\,\,\, B:\ext(C^\circ)\rightarrow K^{*}$$
such
that $S_C(x,y)=\left<A(x),B(y)\right>$ for all $(x,y) \in \ext(C)
\times \ext(C^\circ)$.
\end{definition}

\begin{remark}\label{remark:extension} The maps $A$ and $B$ may be defined over all of $C$ and $C^\circ$ 
by picking a representation of each $x \in C$ (similarly, $y \in C^\circ$) as a convex combination of extreme points of $C$ 
(respectively, $C^\circ$) and extending $A$ and $B$ linearly. Such extensions are not unique.
\end{remark}

With the above set up, we can now characterize 
the existence of a $K$-lift of $C$.

\begin{theorem} 
\label{thm:general_Yannakakis}
If $C$ has a proper $K$-lift then $S_C$ is
$K$-factorizable. Conversely, if $S_C$ is $K$-factorizable then $C$
has a $K$-lift.
\end{theorem}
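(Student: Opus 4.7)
The plan is to prove the two implications separately, with the forward direction relying on conic strong duality and the backward direction on a direct construction of $L$ and $\pi$ from the factorization data.

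For the forward direction, suppose $C = \pi(Q)$ with $Q = K \cap L$ a proper $K$-lift, and write $L = \{q \in \RR^m : Nq = b\}$. For each extreme point $v \in \ext(C)$, choose a preimage $A(v) \in Q \subset K$. To build $B(y)$ for $y \in \ext(C^\circ)$, observe that extremality of $y$ in $C^\circ$ forces $\max_{x \in C}\langle x,y\rangle = 1$ (otherwise $y$ could be scaled out, contradicting extremality). Thus the conic program $\min\{-\langle \pi^* y, q\rangle : q \in K,\, Nq = b\}$ has optimal value $-1$. Properness of the lift supplies Slater's condition, so strong conic duality produces $\nu^*_y$ with $-\pi^* y - N^T \nu^*_y \in K^*$ and $b^T \nu^*_y = -1$. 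Setting $B(y) := -\pi^* y - N^T \nu^*_y$, a direct computation on $q = A(v)$ (which satisfies $Nq = b$ and $\pi(q) = v$) yields $\langle A(v), B(y)\rangle = 1 - \langle v,y\rangle = S_C(v,y)$.

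For the backward direction, label the extreme points $v_\alpha \in \ext(C)$ and $w_\beta \in \ext(C^\circ)$ and assume $S_C(v_\alpha, w_\beta) = \langle A(v_\alpha), B(w_\beta)\rangle$. The crucial step is to produce a linear map $\pi : \RR^m \to \RR^n$ with $\pi(A(v_\alpha)) = v_\alpha$ for every $\alpha$, which amounts to the implication $\sum_\alpha c_\alpha A(v_\alpha) = 0 \Longrightarrow \sum_\alpha c_\alpha v_\alpha = 0$. Pairing the hypothesized linear dependency with $B(w_\beta)$ yields
\[
\Bigl\langle \sum_\alpha c_\alpha v_\alpha,\, w_\beta \Bigr\rangle = \sum_\alpha c_\alpha \quad \text{for every } \beta.
\]
By convexity the identity extends to all $w \in C^\circ$, and specializing at $w = 0$ gives $\sum_\alpha c_\alpha = 0$. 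Since the $\{w_\beta\}$ linearly span $\RR^n$ (as $C^\circ$ is a full-dimensional convex body), it follows that $\sum_\alpha c_\alpha v_\alpha = 0$, and the assignment $A(v_\alpha) \mapsto v_\alpha$ extends to a linear $\pi : \RR^m \to \RR^n$.

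With $\pi$ in hand, set $L := \textup{aff}\{A(v_\alpha) : \alpha\}$ and $Q := K \cap L$. Each $A(v_\alpha)$ lies in $Q$ with $\pi(A(v_\alpha)) = v_\alpha$, so $C \subseteq \pi(Q)$. Conversely, for $z \in Q$ write $z = \sum_\alpha \gamma_\alpha A(v_\alpha)$ with $\sum_\alpha \gamma_\alpha = 1$; the factorization and linearity of $\pi$ give $\langle z, B(w_\beta)\rangle = 1 - \langle \pi(z), w_\beta\rangle$, which is nonnegative since $z \in K$ and $B(w_\beta) \in K^*$. Hence $\langle \pi(z), w_\beta\rangle \leq 1$ for every $\beta$, and taking convex combinations yields $\pi(z) \in C^{\circ\circ} = C$. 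The main obstacle is the linear-algebra step in the backward direction, where one converts the factorization identity into control over linear dependencies of the $A(v_\alpha)$; everything else reduces either to a clean appeal to conic strong duality (with properness supplying Slater) or to routine verification.
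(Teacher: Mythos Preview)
Your argument is correct. The forward direction is essentially the paper's: both choose preimages $A(v)\in Q$ and obtain $B(y)$ by applying strong conic duality (with Slater supplied by properness) to the program computing $\max_{x\in C}\langle x,y\rangle=1$; only the notation differs.

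The backward direction, however, is organized differently. The paper builds the lift from the map $B$: it sets
\[
L=\{(x,z)\in\RR^n\times\RR^m:\ 1-\langle x,y\rangle=\langle z,B(y)\rangle\ \forall\,y\in\ext(C^\circ)\},
\]
projects to $L_K\subset\RR^m$, and shows that for each $z\in K\cap L_K$ the fiber over $z$ in $L$ is a single point $x_z$, yielding an affine (then linear) map $z\mapsto x_z$. You instead build the lift from the map $A$: you first manufacture the linear map $\pi$ by checking that finite linear dependencies among the $A(v_\alpha)$ force the same dependency among the $v_\alpha$ (pairing with $B(w_\beta)$, specializing to $0\in C^\circ$ to kill $\sum c_\alpha$, and using that $\ext(C^\circ)$ spans $\RR^n$), and then take $L=\textup{aff}\{A(v_\alpha)\}$. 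Your affine space is contained in the paper's $L_K$ (since every $A(v_\alpha)$ lies in $L_K$), so you produce a possibly smaller lift; the paper's route avoids the linear-algebra verification by letting the constraints defining $L$ do that work implicitly. Both are clean and of comparable length; the paper's version has the minor advantage that the resulting $L_K$ is described by explicit linear equations in the $B(y)$, which is convenient when one later wants to write the lift down concretely.
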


\begin{proof}
Suppose $C$ has a proper $K$-lift. Then there exists an affine space $L = w_0+L_0$ in $\RR^m$ ($L_0$ is a linear subspace) and a linear map $\pi:\RR^m
\rightarrow \RR^n$ such that $C = \pi(K \cap L)$ and $w_0 \in
\textup{int}(K)$. Equivalently, 
\[
C = \{ x \in \RR^n : x = \pi(w), \quad w \in K \cap (w_0 + L_0) \}.
\]
We need to construct the maps $A \,:\, \textup{ext}(C) \rightarrow K$ and 
$B:\textup{ext}(C^\circ)\rightarrow K^*$ that factorize the
slack operator $S_C$, from the $K$-lift of $C$. 
For $x_i \in \textup{ext}(C)$, define $A(x_i) :=
w_i$, where $w_i$ is any point in the non-empty convex set 
$\pi^{-1} (x_i) \cap K \cap L$.

Let $c$ be an extreme point of $C^\circ$. Then $\textup{max} \{ \,
\langle c, x \rangle \,:\, x \in C \, \} = 1$ since $\langle c, x
\rangle \leq 1$ for all $x \in C$, and if the maximum was smaller than
one, then $c$ would not be an extreme point of $C^{\circ}$.  Let $M$
be a full row rank matrix such that $\textup{ker} \,M = L_0$. Then the
following hold:
$$
\begin{array}{cccccc}
\begin{array}{c} 1 = \\ \\ \\ \end{array} &
\begin{array}{c} \textup{max} \langle c,x \rangle \\  x \in C \\ \\ \end{array} & 
\begin{array}{c} = \\ \\ \\ \end{array} &
\begin{array}{c} \textup{max} \langle c, \pi(w) \rangle \\ w  \in K \cap (w_0 + L_0) \\ \\ \end{array} & 
\begin{array}{c} = \\ \\ \\ \end{array} &
\begin{array}{c} \textup{max} \langle \pi^*(c), w \rangle \\ M w  = M w_0 \\ w \in K \end{array} 
\end{array}
$$
Since $w_0$ lies in the interior of $K$,  by Slater's condition we have strong duality, and we get
$$ 1 = \textup{min} \,\langle M w_0, y \rangle \,:\, M^T y - \pi^*(c) \in K^*$$
with the minimum being attained.
Further, setting $z = M^T y$ we have that 
$$ 1 = \textup{min} \, \langle w_0, z  \rangle \,:\,   z - \pi^*(c) \in K^*, \, z \in L_0^\perp $$
with the minimum being attained. Now define 
$B \,:\, \textup{ext}(C^\circ) \rightarrow K^*$ as the map that sends $y_i \in \textup{ext}(C^\circ)$ to 
$B(y_i) := z - \pi^*(y_i)$, where $z$ is any point in the nonempty convex set $L_0^\perp
\cap (K^* + \pi^*(y_i))$ that satisfies $\langle w_0,z \rangle =1$. Note that for such a $z$, $\langle w_i, z \rangle = 1$ for all $w_i \in L$.
Then $B(y_i) \in K^*$, and for an $x_i \in \textup{ext}(C)$,
\begin{align*}
\langle x_i, y_i \rangle &=
\langle \pi (w_i), y_i \rangle  = 
\langle w_i, \pi^* (y_i )\rangle =
\langle w_i , z - B(y_i) \rangle  \\ 
&= 1 - \langle w_i, B(y_i) \rangle =
1 - \langle A(x_i), B(y_i) \rangle.
\end{align*}

Therefore, $S_C(x_i,y_i) = 1 - \langle x_i, y_i \rangle = \langle A(x_i), B(y_i) \rangle$ for all $x_i \in \textup{ext}(C)$ and 
$y_i \in \textup{ext}(C^\circ)$. 

Suppose now $S_C$ is $K$-factorizable, i.e., there exist maps
$A:\ext(C)\rightarrow K$ and $B:\ext(C^\circ)\rightarrow K^*$ such
that $S_C(x,y)=\left<A(x),B(y)\right>$ for all $(x,y) \in \ext(C)
\times \ext(C^\circ)$. Consider the affine space
\[
L=\{(x,z) \in \RR^n \times \RR^m : 1 - \langle x, y \rangle =\left<z,B(y)\right>,
\ \forall \,\,y \in \ext(C^\circ)\},
\] 
and let $L_K$ be its coordinate projection into $\RR^m$. Note that $0 \not \in L_K$ since otherwise, 
there exists $x \in \RR^n$ such that $1- \langle x, y \rangle = 0$ for all $y \in \ext(C^\circ)$ 
which implies that $C^\circ$ lies in the affine hyperplane $\langle x, y \rangle = 1$. This is a contradiction 
since $C^\circ$ contains the origin. Also, $K \cap L_K \neq \emptyset$ since for each $x \in \ext(C)$, 
$A(x) \in K \cap L_K$ by assumption.

Let $x$ be some point in $\RR^n$ such that there exists some $z \in K$
for which $(x,z)$ is in $L$. Then, for all extreme points $y$ of
$C^\circ$ we will have that $1-\langle x,y \rangle$ is nonnegative. This implies,
using convexity, that $1-\left< x ,y \right>$ is nonnegative for all
$y$ in $C^\circ$, hence $x \in (C^\circ)^\circ=C$.

We now argue that this implies that for each $z \in K \cap L_K$ there exists a unique $x_z
\in \RR^n$ such that $(x_z,z) \in L$. That there is one, comes
immediately from the definition of $L_K$. Suppose now that there is
another such point $x_{z}'$. Then $(t x_{z}+(1-t)x_{z}',z) \in L$ for
all reals $t$ which would imply that the line through $x_{z}$ and
$x_{z}'$ would be contained in $C$, contradicting our assumption that $C$ is 
compact.

The map that sends $z$ to $x_{z}$ is therefore well-defined in $K \cap
L_K$, and can be easily checked to be affine. Since the origin is not in $L_K$, we can extend it to
a linear map $\pi:\RR^m \rightarrow \RR^n$.  To finish the proof it
is enough to show $ C = \pi(K \cap L_K)$. We have already seen that
$\pi(K \cap L_K) \subseteq C$ so we just have to show the reverse
inclusion. For all extreme points $x$ of $C$, $A(x)$ belongs to $K \cap L_K$, and 
therefore, $x=\pi(A(x)) \in \pi(K \cap L_K)$. Since $C =
\conv(\ext(C))$ and $\pi(K \cap L_K)$ is convex, $C \subseteq \pi(K \cap L_K)$.
\end{proof}

The restriction to proper lifts in 
  Theorem~\ref{thm:general_Yannakakis} is not important if the
  cone $K$ has a well-understood facial structure as in the case of
  nonnegative orthants and cones of positive semidefinite matrices.
  If there exists a $K$-lift that is not proper, then there is a
  proper lift to a face of $K$ and we could pass to this face to
  obtain a cone factorization. Since our proof uses strong
  duality, it is not obvious how to remove the properness assumption
  for a general closed convex cone. However, there is a situation
  under which properness can be dropped.

\begin{definition} \label{def:nice cone} \cite{BorweinWolkowicz}
A cone $K$ is {\em nice} if $K^* + F^{\perp}$ is closed for all faces $F$ of $K$. 
\end{definition} 

\begin{corollary} If $K$ is a nice cone, then whenever $C$ has a $K$-lift (not necessarily proper), $S_C$ has a 
$K$-factorization. 
\end{corollary}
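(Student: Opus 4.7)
The approach will be to reduce the non-proper case to the proper case by passing to the smallest face of $K$ that contains the lift, and then use niceness to transport the resulting factorization back to one over $K$ and $K^{*}$. Given a (possibly non-proper) $K$-lift $Q = K \cap L$ of $C$, let $F$ denote the minimal face of $K$ containing $Q$. Then $L$ meets the relative interior of $F$, and $Q = F \cap L$. Setting $V = \textup{span}(F)$ and $L' = L \cap V$, I regard $F$ as a full-dimensional closed convex cone in $V$, with $L'$ meeting its interior relative to $V$, and the restricted projection $\pi|_V \colon V \to \RR^n$ exhibits $Q$ as a proper $F$-lift of $C$.

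Applying Theorem~\ref{thm:general_Yannakakis} to this proper $F$-lift (viewed inside the ambient space $V$) yields maps $A \colon \ext(C) \to F$ and $B' \colon \ext(C^\circ) \to F^{*}_V$ satisfying $S_C(x,y) = \langle A(x), B'(y) \rangle$, where $F^{*}_V$ denotes the dual of $F$ computed inside $V$. Since $F \subseteq K$, the map $A$ already takes values in $K$, so only the dual side requires further work: I must replace each $B'(y) \in F^{*}_V \subset V$ by a point $B(y) \in K^{*} \subset \RR^m$ without disturbing the pairings $\langle A(x), B'(y) \rangle$.

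This is precisely where the niceness hypothesis enters. Let $F^{\vee} := \{ y \in \RR^m : \langle x, y \rangle \geq 0 \textup{ for all } x \in F \}$ denote the dual of $F$ computed in the ambient $\RR^m$. One checks directly that $F^{\vee} = F^{*}_V + F^{\perp}$, and in general only $F^{\vee} = \cl(K^{*} + F^{\perp})$ holds; niceness of $K$ removes the closure and yields the honest equality $F^{\vee} = K^{*} + F^{\perp}$. Consequently each $B'(y) \in F^{*}_V \subseteq F^{\vee}$ decomposes as $B'(y) = B(y) + v(y)$ with $B(y) \in K^{*}$ and $v(y) \in F^{\perp}$, and since $A(x) \in F$ we have $\langle A(x), v(y) \rangle = 0$. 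Therefore $\langle A(x), B(y) \rangle = \langle A(x), B'(y) \rangle = S_C(x,y)$ on every pair of extreme points, producing the desired $K$-factorization. The one delicate ingredient is the equality $F^{\vee} = K^{*} + F^{\perp}$ under niceness; this is exactly the feature unavailable for general closed convex cones and is what forced the properness assumption in Theorem~\ref{thm:general_Yannakakis}.
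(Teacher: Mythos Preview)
Your proof is correct and follows essentially the same route as the paper's: pass to the minimal face $F$ to obtain a proper $F$-lift, apply Theorem~\ref{thm:general_Yannakakis} there, keep the $A$-map (since $F\subseteq K$), and use the niceness identity $F^{\vee}=K^{*}+F^{\perp}$ (which the paper invokes via Pataki's characterization) to correct each $B'(y)$ by an element of $F^{\perp}$ so that it lands in $K^{*}$. Your write-up is in fact a bit more careful than the paper's in distinguishing the dual $F^{*}_V$ taken inside $V=\textup{span}(F)$ from the ambient dual $F^{\vee}$, but the substance is identical.
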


\begin{proof}
In \cite{Pataki2011}  Pataki notes that $K$ is nice if and only if 
$F^*=K^*+F^{\perp}$ for all faces $F$ of $K$. Let 
$A \, : \, \textup{ext}(C) \rightarrow  F$ and $B \, : \, \textup{ext}(C^\circ) \rightarrow F^*$ be the 
$F$-factorization of $S_C$ from the proper lift of $C$ to a face $F$ of $K$. Then $A$ is also a 
map from $\textup{ext}(C)$ to $K$. Define $B' \,:\, \ext(C^\circ) \rightarrow K^*$ as 
$B'(y) = z \in K^*$ such that 
$B(y) - z \in F^\perp$. Then $\langle A(x), B(y) \rangle = \langle A(x), B'(y) \rangle$ for all 
$(x,y) \in \textup{ext}(C) \times \textup{ext}(C^\circ)$ and we obtain a $K$-factorization of $S_C$.
\end{proof}

Polyhedral cones, second order cones and the cones of real symmetric psd matrices $\mathcal{S}^k_+$ 
are all nice. In \cite{Pataki2011} Pataki shows that 
if a cone is nice then all its faces are exposed and he conjectures that the converse is also true.

We now present a simple illustration
of Theorem~\ref{thm:general_Yannakakis} using $K = \PSD^2$.

\begin{example}
Let $C$ be the unit disk in $\RR^2$ which can be written as
\[
C=\left\{
(x,y) \in \RR^2 : \left( \begin{array}{cc}
1+x & y \\
y & 1-x
\end{array}\right) \succeq 0
\right\}.
\]
This means that $S_C$ must have a $\PSD^2$ factorization. Since $C^\circ=C$, $\textup{ext}(C) = \textup{ext}(C^\circ) = \partial C$, and so we have to find maps $A,B: \textup{ext}(C) \rightarrow \PSD^2$ such
that for all $(x_1,y_1), (x_2,y_2) \in \ext(C)$,
\[
\left<A(x_1,y_1),B(x_2,y_2)\right> = 1 -x_1x_2 - y_1y_2.
\] 
But this is accomplished by the maps
\[
A(x_1,y_1)=\left( \begin{array}{cc}
1+x_1 & y_1 \\
y_1 & 1-x_1
\end{array}\right)
\]
and
\[
B(x_2,y_2)=\frac{1}{2}\left( \begin{array}{cc}
1-x_2 & -y_2 \\
-y_2 & 1+x_2
\end{array}\right)
\]
which factorize $S_C$ and can easily be checked to be positive semidefinite in their domains.
\end{example}

The lifts of convex bodies are preserved by many common geometric operators. 

\begin{proposition} \label{prop:geometric operations}
If $C_1$ and $C_2$ are convex bodies, and $K_1$ and $K_2$ are closed convex cones such that $C_1$ has a $K_1$-lift and $C_2$ has a $K_2$-lift, then the following are true:
\begin{enumerate}
\item If $\pi$ is any linear map, then $\pi(C_1)$ has a $K_1$-lift;
\item $C_1^{\circ}$ has a $K_1^*$-lift;
\item Every exposed face of $C_1$ has a $K_1$-lift;
\item The cartesian product $C_1 \times C_2$ has a $K_1 \times K_2$-lift;
\item The Minkowski sum $C_1 + C_2$ has a $K_1 \times K_2$-lift;
\item The convex hull $\textup{conv}(C_1 \cup C_2)$ has a $K_1 \times K_2$-lift.
\end{enumerate}
\end{proposition}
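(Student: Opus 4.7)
The plan is to handle the six items separately, using direct geometric constructions wherever possible and appealing to Theorem~\ref{thm:general_Yannakakis} only when the geometry is not transparent. Throughout I write $C_i = \pi_i(K_i \cap L_i)$ for the given $K_i$-lifts.

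Items (1), (3), (4), (5) admit immediate direct constructions. For (1), $\pi(C_1) = (\pi \circ \pi_1)(K_1 \cap L_1)$. For (3), an exposed face of $C_1$ has the form $C_1 \cap \{x \in \RR^n : \langle c, x \rangle = 1\}$, where I normalize the supporting hyperplane using $0 \in \textup{int}(C_1)$; its $\pi_1$-preimage imposes one further affine equation on $L_1$, still yielding a $K_1$-lift. For (4), $C_1 \times C_2 = (\pi_1 \times \pi_2)\bigl((K_1 \times K_2) \cap (L_1 \times L_2)\bigr)$ is a $(K_1 \times K_2)$-lift directly. For (5), the Minkowski sum is the image of $C_1 \times C_2$ under the linear map $(x_1,x_2) \mapsto x_1+x_2$, so combining (4) with (1) finishes the job.

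For (2), the cleanest route is via Theorem~\ref{thm:general_Yannakakis}. A $K_1$-factorization of $S_{C_1}$ with maps $A : \ext(C_1) \to K_1$ and $B : \ext(C_1^\circ) \to K_1^*$ yields, by swapping the roles of $A$ and $B$ and using $K_1^{**} = K_1$ for a closed convex cone, a $K_1^*$-factorization of $S_{C_1^\circ}$ (noting $S_{C_1^\circ}(y,x) = 1 - \langle y, x \rangle = S_{C_1}(x,y)$). The converse direction of the theorem then delivers a $K_1^*$-lift of $C_1^\circ$.

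Item (6) is the main obstacle: the natural parametrization $\lambda C_1 + (1-\lambda) C_2$ wants to introduce an auxiliary $\RR_+^2$ factor to encode the convex combination parameter, which would enlarge the target cone. My plan is to circumvent this via Theorem~\ref{thm:general_Yannakakis}. Set $P = \conv(C_1 \cup C_2)$. A short argument shows $\ext(P) \subseteq \ext(C_1) \cup \ext(C_2)$ and, for compact bodies containing the origin, $P^\circ = C_1^\circ \cap C_2^\circ$. Using Remark~\ref{remark:extension}, extend the factor maps $B_i$ of $S_{C_i}$ from $\ext(C_i^\circ)$ to all of $C_i^\circ$, and in particular to $\ext(P^\circ) \subset C_1^\circ \cap C_2^\circ$. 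Now define $\tilde A : \ext(P) \to K_1 \times K_2$ by
\[
\tilde A(x) = \begin{cases} (A_1(x), 0) & \text{if } x \in \ext(C_1), \\ (0, A_2(x)) & \text{if } x \in \ext(C_2) \setminus \ext(C_1), \end{cases}
\]
and $\tilde B : \ext(P^\circ) \to K_1^* \times K_2^*$ by $\tilde B(y) = (B_1(y), B_2(y))$. A case check on $x$ shows $\langle \tilde A(x), \tilde B(y) \rangle = 1 - \langle x, y \rangle = S_P(x,y)$ in each case, producing a $(K_1 \times K_2)$-factorization of $S_P$. The converse direction of Theorem~\ref{thm:general_Yannakakis} then yields the desired $(K_1 \times K_2)$-lift. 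The step I expect to demand the most care is verifying the extreme-point and polar identifications needed to make this factorization well-defined on the correct domains.
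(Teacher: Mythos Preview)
Your proposal is correct and follows essentially the same route as the paper: items (1), (3), (4), (5) are handled by the same direct geometric constructions, item (2) by swapping the roles of $A$ and $B$ in Theorem~\ref{thm:general_Yannakakis}, and item (6) by exactly the same factorization trick using $\conv(C_1\cup C_2)^\circ = C_1^\circ \cap C_2^\circ$, the inclusion $\ext(P)\subseteq \ext(C_1)\cup\ext(C_2)$, and the extension of the $B_i$ maps via Remark~\ref{remark:extension}. Your case-split definition of $\tilde A$ is precisely the paper's ``extend $A_1$ by zero outside $C_1$ and $A_2$ by zero outside $C_2\setminus C_1$'' written out explicitly.
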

\begin{proof}
The first property follows immediately from the definition of a 
$K_1$-lift. The second is an immediate consequence of
Theorem~\ref{thm:general_Yannakakis}. 
For the third property, if a face $F$ of $C_1$ is exposed, then $F = C_1 \cap H$ where $H$ is a hyperplane in $\RR^n$. If $K_1 \cap L$ is a $K_1$-lift of $C$, then $K_1 \cap L'$ is a $K_1$-lift of $F$  where $L'$ is the affine space obtained by adding the equation of $H$ to the equations defining $L$.
The fourth property is again easy
to derive from the definition since, if $C_1=\pi_1(K_1 \cap L_1)$ and
$C_2=\pi_2(K_2 \cap L_2)$, then $C_1 \times C_2 = (\pi_1 \times \pi_2)
(K_1 \times K_2 \cap L_1 \times L_2)$.  
The fifth one follows from (1) and 
the fact that the Minkowski sum $C_1+C_2$ is a linear image of the cartesian product $C_1 \times C_2$.

For the sixth, we use the fact that 
$\textup{conv}(C_1 \cup C_2)^\circ=C_1^\circ \cap C_2^\circ$. Given 
factorizations $A_1,B_1$ of $S_{C_1}$ and $A_2,B_2$ of $S_{C_2}$, we
have seen that we can extend $A_i$ to all of $C_i$, and $B_i$ to all of $C_i^{\circ}$, and get that $1-\left<x,y\right>=\left<A_i(x),B_i(y)\right>$  
for all $(x,y) \in C_i \times C_i^{\circ}$. Furthermore, extend $A_1$ to $\textup{conv}(C_1 \cup C_2)$ by
defining it to be zero outside $C_1$ and set $A_2$ to be zero outside $C_2 \setminus C_1$. 
Then, since $\ext(\textup{conv}(C_1 \cup C_2)) \subseteq \ext(C_1) \cup \ext(C_2)$ and $\ext(C_1^\circ \cap C_2^\circ)$ is contained in both $C_1^\circ$ and $C_2^\circ$, the maps, 
$(A_1,A_2):\ext(\textup{conv}(C_1 \cup C_2))\rightarrow K_1 \times K_2$ and 
$(B_1,B_2):\ext(\textup{conv}(C_1 \cup C_2)^\circ)\rightarrow K_1^*
\times K_2^*$ give a $K_1 \times K_2$ factorization of $S_{\textup{conv}(C_1 \cup C_2)}$.
\end{proof}

Explicit constructions of the lifts guaranteed in
Proposition~\ref{prop:geometric operations} can be found in the work
of Ben-Tal, Nesterov and Nemirovski; see
e.g. \cite{BenTalNemirovskiBook,NN}.  They were especially interested in
the case of lifts into the cones of positive semidefinite matrices.
Of significant interest is the relationship between lifts and duality,
particularly when considering a self-dual cone $K$.  When $K$ is
  self dual, Theorem~\ref{thm:general_Yannakakis} shows that the
  existence of a $K$-lift is a property of both the convex body and
  its polar making the theory invariant under duality. We now
  examine the behavior of cone lifts under projective transformations.

\begin{proposition} \label{prop:transformations}
Let $C \subset \RR^n$ be a convex body with a $K$-lift where $K \subset \RR^m$ is a closed convex cone. If  $\Pi$ is a projective transformation with $\Pi(C)$ compact, then $\Pi(C)$ has a $K$-lift.
\end{proposition}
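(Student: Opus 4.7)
The plan is to construct a new $K$-lift of $\Pi(C)$ by tracking how the projective rescaling inherent in $\Pi$ lifts to a linear change of variables on the ambient cone $K$. First I would write $\Pi(x) = (Ax+b)/(\langle c, x\rangle + d)$ and use the compactness of $\Pi(C)$ together with $0 \in \textup{int}(C)$ to ensure $\langle c, x\rangle + d > 0$ on $C$, so in particular $d > 0$.

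Given the lift $C = \pi(K \cap L)$ with $L = \{y \in \RR^m : My = b_0\}$, the guiding idea is that for each $y \in K \cap L$ with $\pi(y) = x$, the rescaled point $y' := y/(\langle c, x\rangle + d)$ still lies in $K$, since $K$ is a cone and the scaling factor is positive; this rescaling converts the nonlinear action of $\Pi$ into a linear one. Setting $t := 1/(\langle c, x\rangle + d)$, the identity $\langle c, \pi(y')\rangle + td = 1$ expresses $t$ as an affine function of $y'$ alone. Eliminating $t$ from $My' = t\,b_0$ then yields a single linear equation in $y'$ that defines a new affine subspace $L' \subset \RR^m$, and substituting the same expression for $t$ into $\Pi(x) = A\pi(y') + tb$ gives an affine map $\pi' : \RR^m \to \RR^n$ satisfying $\pi'(y') = \Pi(\pi(y))$; a standard translation makes $\pi'$ linear. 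This produces the candidate lift $\Pi(C) = \pi'(K \cap L')$.

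The forward inclusion $\Pi(C) \subseteq \pi'(K \cap L')$ is immediate from the construction. The reverse inclusion, which I expect to be the main obstacle, requires showing that every $y' \in K \cap L'$ arises as such a rescaling of some $y \in K \cap L$ — equivalently, that the factor $t(y')$ is strictly positive throughout $K \cap L'$. To establish this, I would argue by contradiction: if $t$ were nonpositive somewhere on $K \cap L'$, pairing this point with a known point $y_0 \in K \cap L'$ coming from the forward construction (where $t(y_0) > 0$) and invoking continuity of $t$ along the connecting segment would produce $y^{**} \in K \cap L'$ with $t(y^{**}) = 0$. This in turn forces $My^{**} = 0$ and $\langle c, \pi(y^{**})\rangle = 1$ simultaneously, placing $y^{**}$ in the recession cone $K \cap L_0$ of $K \cap L$. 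Since $C = \pi(K \cap L)$ is compact, its recession cone is trivial and $\pi$ must vanish on $K \cap L_0$, contradicting $\langle c, \pi(y^{**})\rangle = 1$. This rules out the pathology, and the $K$-lift $\Pi(C) = \pi'(K \cap L')$ is complete.
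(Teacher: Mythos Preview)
Your argument is correct and takes a genuinely different route from the paper. The paper proves this proposition by passing through the factorization theorem (Theorem~\ref{thm:general_Yannakakis}): it restricts to the smallest face of $K$ to obtain a proper lift, extracts a $K$-factorization $(A,B)$ of $S_C$, and then builds a new factorization of $S_{\Pi(C)}$ by setting $A'(x) = A(\Pi^{-1}(x))/(1+\langle c,\Pi^{-1}(x)\rangle)$ and $B'(y) = B(P^Ty - c)$. Your approach is purely geometric: you rescale points of the lift itself by the projective denominator and check directly that the rescaled set is again an affine slice of $K$. This is more elementary in that it never invokes Theorem~\ref{thm:general_Yannakakis} and does not require passing to a proper lift; the price is the recession-cone argument needed for the reverse inclusion, which the paper's factorization detour sidesteps entirely. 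One small slip: eliminating $t$ from $My' = t\,b_0$ produces a \emph{system} of affine equations (one per row of $M$), not a single equation, but this does not affect the argument.
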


\begin{proof}
Without loss of generality we may assume the lift to be proper by
passing to the smallest face of $K$ containing the lift of $C$. Then,
by Theorem \ref{thm:general_Yannakakis}, there exists maps $A \, : \,
\textup{ext}(C) \rightarrow K$ and $B \, : \, \textup{ext}(C^\circ)
\rightarrow K^*$ factorizing $S_C$, and we can extend their domains to
$C$ and $C^{\circ}$ as noted in Remark \ref{remark:extension}.  Recall
that a real projective transformation $\Pi$ in $\RR^n$ is a map
sending $x$ to $Px/(1+\left<c,x\right>)$ where $P$ is some $n \times
n$ (invertible) real matrix, and $c$ a vector in $\RR^n$ .  The
compactness of $\Pi(C)$ is equivalent to $1+\left<c,x\right>$ not
vanishing on $C$ and so we may assume without loss of generality that
$1 + \left< c, x \right>$ is positive on $C$.

Since for $y \in \Pi(C)^{\circ}$ and $x \in C$, $0 \leq 1-\left<y,
\Pi(x)\right> = 1 - \frac{y^T Px}{1+\left<c,x\right>} =
\frac{1+\left<c,x\right>-y^T Px}{1+\left<c,x\right>}$, we have that
$\left<P^T y-c,x\right> \leq 1$, and therefore, $z_y := P^T y-c \in
C^{\circ}$. Consider the maps $A': \Pi(C)\rightarrow K$ and
$B':\Pi(C)^{\circ}\rightarrow K^*$ given by $A'(x) =
A(\Pi^{-1}(x))/(1+\left<c, \Pi^{-1}(x)\right>)$ and
$B'(y)=B(z_y)$. These maps form a $K$-factorization of $S_{\Pi(C)}$
and hence, $\Pi(C)$ has a $K$-lift by Theorem
\ref{thm:general_Yannakakis}. The case of affine transformations is
trivial, but can be seen as a particular case of the projective case
we just proved.
\end{proof}

A restricted class of lifts that has received much attention is
that of {\em symmetric lifts}. The idea there is to demand that the lift not
only exists, but also preserves the symmetries of the object being
lifted. Several definitions of symmetry have been studied in the context of lifts to nonnegative orthants in papers such as \cite{Yannakakis},
\cite{KaibelPashkovichTheis} and \cite{Pashkovich}. Theorem~\ref{thm:general_Yannakakis} can
 be extended to symmetric lifts. 

Let $G$ be a subgroup of $\GL_n$ acting on $\ext(C)$. A simple example of such a group would be $\Aut(C)$, the group of all rigid linear transformations $\varphi$ of
$\RR^n$ such that $\varphi(C)=C$, restricted to $\ext(C)$. Any such group $G$ is compact, hence has a unique measure $\mu_G$, its Haar  measure, such that $\mu_{G}(G)=1$ and $\mu_{G}$ is invariant under  multiplication, i.e., $\mu_{G}(gU)=\mu_{G}(U)$ for all $g \in G$ and  all $U \subseteq G$. Note that allowing affine transformations instead of linear ones, would not be essentially different, as any group of affine transformations acting on 
a compact set has a common fixed point, so after a translation of $C$ it would be simply a subgroup of $\GL_n$. 

\begin{definition} \label{def:symmetric lift}
Let $K$ be a closed convex cone and $C$ a convex body, such that 
$C=\pi(K \cap L)$ for some affine subspace $L$ and linear map $\pi$. 
Furthermore, let $G \subseteq \GL_n$ be a group acting on $\ext(C)$ 
and $H\subseteq \GL_m$ a group acting on $K$.
We say that the lift $K \cap L$ of $C$ 
is {\em $(G,H)$-symmetric} if there exists a group homomorphism from $G$
to $H$ sending $\varphi \in G$ to $f_{\varphi} \in
H$ such that $f_{\varphi}(K \cap L)=K \cap L$ and $\pi \circ f_{\varphi} =
\varphi \circ \pi$, when restricted to $K \cap L \cap \pi^{-1}(\ext(C))$.
We will say the lift is {\em symmetric} if it is $(\Aut(C),\Aut(K))$-symmetric.
\end{definition}

The lifts obtained from the traditional lift-and-project methods mentioned in the Introduction are often symmetric in the sense of Definition~\ref{def:symmetric lift}, so it makes sense to study such lifts. 
In order to get a symmetric version of Theorem~\ref{thm:general_Yannakakis}, we have to
introduce a notion of {\em symmetric factorization} of $S_C$.

\begin{definition} \label{def:symmetric factorization}
Let $C$, $K$, $G$ and $H$ be as in Definition~\ref{def:symmetric lift}, and
$A:\ext(C)\rightarrow K$ and $B:\ext(C^\circ)\rightarrow K^*$ a
$K$-factorization of $S_C$. We say that the factorization is {\em  $(G,H)$-symmetric}
if there exists a group homomorphism from $G$ to $H$
sending $\varphi \in G$ to $f_{\varphi} \in H$ such that
$A \circ \varphi = f_{\varphi} \circ A$.
Call the factorization {\em symmetric} if it is $(\Aut(C),\Aut(K))$-symmetric.
\end{definition}

Note that any action of $G \subseteq \GL_n$ on $C$ defines trivially an action of $G$ on $C^{\circ}$, and similarly any action of $H\subseteq \GL_m$ on $K$ defines an
action on $K^*$. With these actions we can see that if a $K$-factorization is $(G,H)$-symmetric in the sense of the previous definition, the group homomorphism $f$ would also verify
$B \circ \varphi = f_{\varphi} \circ B$. Hence, Definition~\ref{def:symmetric factorization} is actually invariant with respect to polarity, even if it seems to only depend on the map $A$.
This observation would still be true if we had considered $G$ and $H$ to be subgroups of projective transformations of $\RR^n$ and $\RR^m$ respectively, but general linear
groups are enough to cover all interesting examples we know.
We can now establish the symmetric version of Theorem~\ref{thm:general_Yannakakis}.

\begin{theorem} \label{thm:general_Yannakakis_symm}
If $C$ has a proper $(G,H)$-symmetric $K$-lift then $S_C$ has a $(G,H)$-symmetric $K$-factorization. 
Conversely, if $S_C$ has a $(G,H)$-symmetric $K$-factorization then $C$ has a $(G,H)$-symmetric $K$-lift.
\end{theorem}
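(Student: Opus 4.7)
The plan is to mimic the two directions of the proof of Theorem~\ref{thm:general_Yannakakis}, inserting Haar averaging at each choice to enforce compatibility with the group actions. For the forward direction, start with a proper $(G,H)$-symmetric $K$-lift with homomorphism $\varphi \mapsto f_\varphi$ and first produce a Slater point $w_0 \in L \cap \textup{int}(K)$ that is fixed by every $f_\varphi$. Such a point exists by Haar-averaging any initial $w'_0 \in L \cap \textup{int}(K)$ (nonempty by properness): the orbit $\{f_\varphi(w'_0):\varphi \in G\}$ is a compact subset of the relatively open convex set $L \cap \textup{int}(K)$, so its convex hull, and hence the average $w_0 := \int_G f_\varphi(w'_0)\, d\mu_G(\varphi)$, stays inside, and $f_\psi(w_0)=w_0$ for every $\psi \in G$ by left-invariance of $\mu_G$. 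Next, for each $x \in \ext(C)$ pick an arbitrary preimage $A_0(x) \in \pi^{-1}(x) \cap K \cap L$ as in the proof of Theorem~\ref{thm:general_Yannakakis}, and define
\[
A(x) := \int_G f_\varphi^{-1}\bigl(A_0(\varphi(x))\bigr)\, d\mu_G(\varphi).
\]
Since $f_\varphi$ preserves $K \cap L$, the value $A(x)$ lies in $K \cap L$; the compatibility $\pi \circ f_\varphi = \varphi \circ \pi$ forces $\pi \circ A = \textup{id}$; and the symmetry $A \circ \psi = f_\psi \circ A$ follows from the right-invariance of $\mu_G$ under $\varphi \mapsto \varphi\psi^{-1}$ together with the homomorphism property of $f$.

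Rather than average a non-symmetric $B_0$ independently, which would decouple the two Haar integrals and destroy the factorization identity, build $B$ through the strong-duality part of the original proof using the invariant $w_0$ and symmetric dual witnesses. For each orbit representative $t$ of the induced $G$-action on $\ext(C^\circ)$, pick any $z_t \in L_0^\perp \cap (K^*+\pi^*(t))$ with $\langle w_0,z_t\rangle = 1$, and average over the stabilizer of $t$ in $G$ to make $z_t$ fixed by the dual action of $f_\varphi$ on $K^*$ for every $\varphi$ in the stabilizer. Extend equivariantly across the orbit and set $B(y):=z_y-\pi^*(y)$; well-definedness along each orbit follows from the invariance of $w_0$ and of $L_0$ under $f$. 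The map $B$ is then equivariant by construction, and the factorization identity
\[
\langle A(x), B(y)\rangle = \langle A(x), z_y\rangle - \langle A(x), \pi^*(y)\rangle = 1 - \langle \pi(A(x)), y\rangle = 1-\langle x,y\rangle
\]
holds as in the original proof, using $A(x)\in L$, $z_y \in L_0^\perp$, and $\langle w_0,z_y\rangle = 1$.

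For the converse, feed the symmetric factorization into the construction of Theorem~\ref{thm:general_Yannakakis}, producing the affine space $L$, its coordinate projection $L_K$, and the linear extension $\pi$ of $z \mapsto x_z$. The claim is that the same $\varphi \mapsto f_\varphi$ witnesses the symmetry of this lift. Using that $B$ is also equivariant with respect to the induced dual actions (as noted after Definition~\ref{def:symmetric factorization}), a short computation shows that if $(x_z,z)\in L$ then $(\varphi(x_z),f_\varphi(z))\in L$, whence $f_\varphi$ preserves $K\cap L_K$ and $\pi\circ f_\varphi = \varphi \circ \pi$. The main obstacle throughout is the interplay between averaging and the bilinear factorization identity: symmetrizing through the lift data, namely the Slater point and the dual witnesses, rather than through the non-symmetric $A_0$ and $B_0$ independently, is what makes the factorization identity survive automatically and avoids a delicate double-integral calculation.
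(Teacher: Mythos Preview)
Your argument is essentially correct but works harder than necessary, and takes a somewhat different route from the paper in both directions.

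\textbf{Forward direction.} The key simplification you miss is that Definition~\ref{def:symmetric factorization} only demands equivariance of $A$, not of $B$. Once you have an equivariant section $A:\ext(C)\to K\cap L$ of $\pi$ (and your global Haar average does produce one, just as the paper's ``average over the stabilizer, then extend equivariantly along orbits'' does), you are done: the proof of Theorem~\ref{thm:general_Yannakakis} is constructive and allows \emph{any} section of $\pi$ as the $A$-map, so it supplies a compatible $B$ automatically. Your invariant Slater point and the orbit-by-orbit construction of an equivariant $B$ are therefore unnecessary. Moreover, the $B$-construction as sketched is not fully justified: to show that stabilizer-averaging $z_t$ keeps $z_t-\pi^*(t)\in K^*$ you implicitly need $f_\varphi^{-T}\pi^*(t)=\pi^*(t)$ for $\varphi$ in the stabilizer, which would follow from $\pi\circ f_\varphi=\varphi\circ\pi$ holding on all of $\RR^m$; but Definition~\ref{def:symmetric lift} only guarantees this identity on $K\cap L\cap\pi^{-1}(\ext(C))$. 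The paper sidesteps this entirely by not touching $B$.

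\textbf{Converse direction.} Here your approach genuinely differs. You argue that the original $L_K$ from Theorem~\ref{thm:general_Yannakakis} is already $f_\varphi$-invariant, using the equivariance of $B$ under the dual action (invoking the remark after Definition~\ref{def:symmetric factorization}). The paper instead restricts to the smaller affine subspace $L'$ spanned by $\{A(x):x\in\ext(C)\}$, whose $f_\varphi$-invariance follows directly from the equivariance of $A$ alone. Your route is valid granted the $B$-equivariance remark, and arguably more direct since it does not shrink the lift; the paper's route is more self-contained because it uses only the hypothesis actually stated in Definition~\ref{def:symmetric factorization}.
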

\begin{proof} 
First suppose that $C$ has a proper $(G,H)$-symmetric $K$-lift with $C=\pi(K \cap L)$. 
For each orbit of the action of the group $G$ on $\ext(C)$, pick a representative $x_0$, and
let $A'(x_0)$ be any point in $K \cap L$ such that $\pi(A'(x_0))=x_0$. Let $G_{x_0} \subseteq G$ be the subgroup of all automorphisms
that fix $x_0$. Then we can define 
$$A(x_0) := \int_{\varphi \in G_{x_0}} f_{\varphi}(A'(x_0)) d\mu_{G_{x_0}}$$
which generalizes the construction in \cite[Step 2, pp 449]{Yannakakis}.
For a finite group, this is just the usual average of all images of $A'(x_0)$ under the action of $G_{x_0}$. 
For any other point $x'$ in the same orbit as $x_0$, pick any $\psi$ such that $\psi(x_0)=x'$ and define $A(x') := f_{\psi}(A(x_0))$. The point $A(x')$ in $K \cap L$  does
not actually depend on the choice of $\psi$. To see this it is enough to note that $f_{\mu}\circ A(x_0) = A(x_0)$ for all $\mu \in G_{x_0}$
and if $\psi_1$ and $\psi_2$ both send $x_0$ to $x'$, then $f_{\psi_1}^{-1} \circ f_{\psi_2}=f_{\psi_1^{-1}\psi_2}$ and $\psi_1^{-1}\psi_2$ is in
$G_{x_0}$.

Since $K \cap L$ is a proper lift of $C$, we know we have a $K$-factorization of $S_C$ by Theorem~\ref{thm:general_Yannakakis}. If we follow the proof of that 
result, we see that it is actually constructive, in the sense that we can pick as a map from $\ext(C) \rightarrow K$ any section of the projection $\pi$. In particular,
we can pick the map $A$ we just defined, since we have $\pi(A(x))=x$ for every $x \in \ext(C)$. This means that such a map $A$ can be extended to a $K$-factorization $A,B$ of $S_C$.
 For any $\mu \in G$ and $x \in \ext(C)$, we have
$A \circ \mu(x) = A\circ \mu \circ \psi (x_0)$, for some $\psi$ and $x_0$ in the orbit of $x$ and so, by the above considerations,
$$A \circ \mu(x) = f_{\mu \circ \psi} \circ A(x_0) = f_{\mu} \circ f_{\psi} \circ A(x_0)=f_{\mu}\circ A(\psi x_0)=f_{\mu} \circ A(x),$$
and hence, we have a $(G,H)$-symmetric $K$-factorization of $S_C$.

Suppose now we have a $(G,H)$-symmetric $K$-factorization of $S_C$. Since it is in particular a $K$-factorization of $S_C$, we have a $K$-lift $K \cap L$ of $C$ by
Theorem~\ref{thm:general_Yannakakis}. From the proof of that theorem we know that $A(x)$ is in 
 $K \cap L$ for all $x \in \ext(C)$.
Let $L'$ be the affine subspace of $L$ spanned by all such points $A(x)$. It is clear from the definition that $L'$ is $f_{\varphi}$ invariant for
all $\varphi \in G$. Furthermore, given any $y \in L'$ we can write it as an affine combination $\sum_i \alpha_i A(x_i)$ for some
$x_i$ in $\ext(C)$, and so for all $\varphi \in G$, we have
$$\pi(f_{\varphi}(y)) = \sum_{i} \alpha_i \pi(f_{\varphi}(A(x_i))) = \sum_{i} \alpha_i \pi(A(\varphi x_i)) = \sum_{i} \alpha_i \varphi x_i,$$
which is simply the image of $\pi(y)$ under $\varphi$. Hence, $K \cap L'$ is a $(G,H)$-symmetric lift of $C$.
\end{proof}

\section{Cone lifts of polytopes} 
\label{sec:polytopes}

The results developed in the previous section for general convex bodies specialize nicely to polytopes, 
providing a more general version of the original result of Yannakakis relating polyhedral lifts of polytopes 
and nonnegative factorizations of their slack matrices. We first introduce the necessary definitions.

For a full-dimensional polytope $P$ in $\RR^n$, let
$V_P=\{p_1,\ldots,p_v\}$ be its set of vertices, $F_P$ its set of
facets, and $f := \left|F_P\right|$. Recall that each facet $F_i$ in
$F_P$ corresponds to a unique (up to multiplication by nonnegative
scalars) linear inequality $h_i(x) \geq 0$ that is valid on $P$ such that 
$F_i=\{ x \in P : h_i(x)=0\}$. These form 
(again up to multiplication by nonnegative scalars) the unique 
irredundant representation of $P$ as
\[
P=\{ x \in \RR^n : h_1(x) \geq 0 , \ldots, h_f(x) \geq 0\}.
\] 
Since we are assuming that the origin is in
the interior of $P$, $h_i(0) > 0$ for each $i=1,\ldots,f$. Therefore, we can make the facet description
of $P$ unique by normalizing each $h_i$ to verify $h_i(0)=1$. We will call
this the {\em canonical inequality representation of $P$}.

\begin{definition}
Let $P$ be a full-dimensional polytope in $\RR^n$ with vertex set 
$V_P=\{p_1,\ldots,p_v\}$ and with an inequality representation
\[
P=\{ x \in \RR^n : h_1(x) \geq 0 , \ldots, h_f(x) \geq 0\}.
\]
Then the nonnegative matrix in $\RR^{v \times f}$ whose $(i,j)$-entry
is $h_j(p_i)$ is called a {\em slack matrix of $P$}. If the $h_i$ form
the canonical inequality representation of $P$, we call the corresponding slack matrix 
the {\em canonical slack matrix of $P$}.
\end{definition}

In the case of a polytope $P$, $\ext(P)$ is just $V_P$, and the
elements of $\ext(P^\circ)$ are in bijection with the facets of
$P$. This means that the operator $S_P$ is actually a finite map from
$V_P \times F_P$ to $\RR_+$ that sends a pair $(p_i,F_j)$ to
$h_j(p_i)$, where $h_j$ is the canonical inequality corresponding to
the facet $F_j$.  Hence,we may identify the slack operator of $P$ with
the canonical slack matrix of $P$ and use $S_P$ to also denote this
matrix.  We now need a definition about factorizations of non-negative
matrices.

\begin{definition} 
\label{def:K-factorization of nonnegative matrices}
Let $M = (M_{ij}) \in \RR_+^{p \times q}$ be a nonnegative matrix and $K$ a
closed convex cone. Then a $K$-{\em factorization} of $M$ is a pair of
ordered sets $a^1, \ldots, a^p \in K$ and $b^1, \ldots, b^q \in K^*$
such that $\langle a^i, b^j \rangle = M_{ij}$.
\end{definition}

Note that $M \in \RR^{p \times q}_+$ has a $\RR_+^k$-factorization if and only
if there exist a $p \times k$ nonnegative matrix $A$ and a $k \times
q$ nonnegative matrix $B$ such that $M=AB$. Therefore, Definition~\ref{def:K-factorization of nonnegative matrices} 
generalizes nonnegative factorizations of nonnegative matrices to arbitrary closed convex cones.  Since any slack matrix of $P$ can be obtained from the
canonical one by multiplication by a diagonal nonnegative matrix,
it is $K$-factorizable if and only if $S_P$ is $K$-factorizable. 
We can now state Theorem~\ref{thm:general_Yannakakis} for polytopes.

\begin{theorem} \label{thm:general_Yannakakis_for_polytopes}
If a full-dimensional polytope $P$ has a proper $K$-lift then every slack matrix of $P$ admits a 
$K$-factorization. Conversely, if some slack matrix of $P$ has a
$K$-factorization then $P$ has a $K$-lift.
\end{theorem}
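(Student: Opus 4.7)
The plan is to deduce this directly from Theorem~\ref{thm:general_Yannakakis} via the identification of the slack operator of $P$ with its canonical slack matrix, together with the observation that all slack matrices of $P$ are equivalent for the purpose of $K$-factorizability.

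First, I would make the correspondences precise. For a full-dimensional polytope $P$ with $0$ in its interior, $\ext(P) = V_P$ is finite, and $\ext(P^\circ)$ is in bijection with $F_P$: the vertex $y_j \in \ext(P^\circ)$ corresponding to facet $F_j$ is precisely the vector such that the canonical inequality reads $h_j(x) = 1 - \langle y_j, x \rangle \geq 0$. Under this identification, the slack operator takes the explicit form $S_P(p_i, y_j) = 1 - \langle p_i, y_j \rangle = h_j(p_i)$, which is exactly the $(i,j)$ entry of the canonical slack matrix. Consequently, a $K$-factorization of $S_P$ (in the sense of the earlier section) is literally the same data as a $K$-factorization of the canonical slack matrix (in the sense of Definition~\ref{def:K-factorization of nonnegative matrices}).

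Next, I would handle the passage from the canonical slack matrix to an arbitrary slack matrix. Any other choice of inequality representation of $P$ is obtained by rescaling each $h_j$ by a positive scalar $\lambda_j > 0$, so any slack matrix $M$ of $P$ has the form $M = S_P \cdot D$ with $D = \diag(\lambda_1,\ldots,\lambda_f)$. If $\{a^i\} \subset K$, $\{b^j\} \subset K^*$ is a $K$-factorization of $S_P$, then $\{a^i\}, \{\lambda_j b^j\}$ is a $K$-factorization of $M$ (since $K^*$ is closed under positive scaling), and the reverse substitution works in the other direction. Thus the existence of a $K$-factorization is a property shared by every slack matrix of $P$ (equivalently, by $S_P$).

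With these identifications in hand, both implications reduce to a direct invocation of Theorem~\ref{thm:general_Yannakakis}. If $P$ has a proper $K$-lift, that theorem yields a $K$-factorization of $S_P$, which by the rescaling argument gives a $K$-factorization of every slack matrix. Conversely, if some slack matrix admits a $K$-factorization, then so does $S_P$, and the second half of Theorem~\ref{thm:general_Yannakakis} produces a (not necessarily proper) $K$-lift of $P$. The main ``obstacle'' here is not a mathematical one but purely notational: one must be careful that the identification $\ext(P^\circ) \leftrightarrow F_P$ is normalized so that $S_P(p_i, y_j)$ literally equals $h_j(p_i)$, after which the argument is essentially a bookkeeping exercise.
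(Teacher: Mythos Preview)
Your proposal is correct and matches the paper's approach exactly: the paper likewise observes that the slack operator of $P$ is identified with the canonical slack matrix, that any other slack matrix differs from it by a nonnegative diagonal rescaling (hence is $K$-factorizable iff $S_P$ is), and then invokes Theorem~\ref{thm:general_Yannakakis}.
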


Theorem~\ref{thm:general_Yannakakis_for_polytopes} is a direct translation of 
Theorem~\ref{thm:general_Yannakakis} 
using the identification between the slack operator of $P$ and the canonical slack
matrix of $P$. The original theorem of Yannakakis \cite[Theorem~3]{Yannakakis} proved this result 
in the case where $K$ was some nonnegative orthant $\RR_+^l$.

\begin{example} \label{ex:hexagon}
To illustrate Theorem~\ref{thm:general_Yannakakis_for_polytopes} consider the regular hexagon in the plane with canonical inequality description 
$$ H=\left \{ (x_1,x_2) \in \RR^2 \,:\, \left(\begin{array}{cc}
1 & \sqrt{3}/3 \\
0 & 2\sqrt{3}/3 \\
-1 & \sqrt{3}/3 \\
-1 & -\sqrt{3}/3 \\
0 & -2\sqrt{3}/3 \\
1 & -\sqrt{3}/3 
\end{array}
\right)
\left(\begin{array}{c}
x_1 \\
x_2
\end{array}
\right) \leq 
\left(
\begin{array}{c}
1 \\
1\\
1 \\  
 1 \\
 1\\
 1
\end{array}
\right) \right \}. $$ 
We will denote the coefficient matrix by $F$ and the right hand side
vector by $d$.  It is easy to check that $H$ cannot be the projection of an 
affine slice of $\RR^k_+$ for $k < 5$. Therefore, we ask whether it can be the linear image 
of an affine slice of $\RR_+^5$, which turns out to be 
surprisingly non-trivial. Using
Theorem~\ref{thm:general_Yannakakis_for_polytopes} this is equivalent to asking if 
the canonical slack matrix of the hexagon, 
$$ S_H := \left( \begin{array}{cccccc} 
0 & 0 & 1 & 2 & 2 & 1 \\
1 & 0 & 0 & 1 & 2 & 2\\
2 & 1 & 0 & 0 & 1 & 2 \\
2 & 2 & 1 & 0 & 0 & 1 \\
1 & 2 & 2 & 1 & 0 & 0 \\
0 & 1 & 2 & 2 & 1 & 0
\end{array} \right),$$
has a $\RR_+^5$-factorization. Check that 
$$S_H = 
\left( \begin {array}{ccccc} 1&0&1&0&0\\\noalign{\medskip}1&0&0&0&1
\\\noalign{\medskip}0&0&0&1&2\\\noalign{\medskip}0&1&0&0&1\\\noalign{\medskip}0
&1&1&0&0\\\noalign{\medskip}0&0&2&1&0\end {array} \right) 
\left( \begin {array}{cccccc} 0&0&0&1&2&1\\\noalign{\medskip}1&2&1&0&0&0
\\\noalign{\medskip}0&0&1&1&0&0\\\noalign{\medskip}0&1&0&0&1&0
\\\noalign{\medskip}1&0&0&0&0&1\end {array} \right),$$
where we call the first matrix $A$ and the second matrix $B$. We may take the rows of $A$ 
as elements of $\RR^5_+$, and the columns of $B$ as elements of $\RR_+^5 = (\RR^5_+)^*$, and they provide us a $\RR_+^5$-factorization
of the slack matrix $S_H$, proving that this hexagon has a $\RR^5_+$-lift while the trivial polyhedral lift would have been to $\RR^6_+$.

We can construct the lift explicitly using the proof of the Theorem~\ref{thm:general_Yannakakis}. Note that 
$$H=\{(x_1,x_2) \in \RR^2 : \exists \,\,y \in \RR_+^5 \textrm{ s.t. } F x + B^T y = d\}.$$
Hence, the exact slice of $\RR_+^5$ that is mapped to the hexagon is simply
$$\{y \in \RR_+^5 : \exists \,\,x \in \RR^2 \textrm{ s.t. }  B^T y = d - F x\}.$$
By eliminating the $x$ variables in the system we get 
$$\{ y \in \RR^5_+ \,:\, y_1 + y_2 + y_3 + y_5 = 2, y_3 + y_4 + y_5 = 1 \},$$
and so we have a three dimensional slice of $\RR^5_+$ projecting down to $H$. This projection is visualized
in Figure \ref{fig:hexagon_proj}.

\begin{figure} 
\includegraphics[scale=0.5]{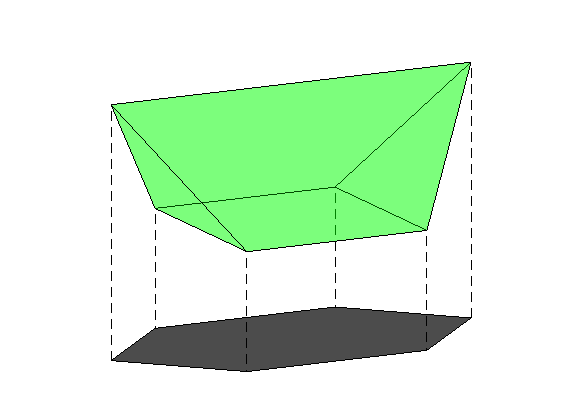}
\caption{Lift of the regular hexagon.}
\label{fig:hexagon_proj}
\end{figure}

The hexagon is a good example to see that the existence of lifts depends on more than the combinatorics of the facial structure of the polytope.
If instead of a regular hexagon we take the hexagon with vertices $(0,-1)$, $(1,-1)$, $(2,0)$, $(1,3)$, $(0,2)$ and $(-1,0)$, as seen in Figure \ref{fig:irregular_hexagon}, a valid slack matrix would be
$$ S := \left(
\begin{array}{cccccc}
 0 & 0 & 1 & 4 & 3 & 1 \\
 1 & 0 & 0 & 4 & 4 & 3 \\
 7 & 4 & 0 & 0 & 4 & 9 \\
 3 & 4 & 4 & 0 & 0 & 1 \\
 3 & 5 & 6 & 1 & 0 & 0 \\
 0 & 1 & 3 & 5 & 3 & 0
\end{array}
\right).$$
One can check that if a $6 \times 6$ matrix with the zero pattern of a slack matrix of a hexagon has a $\RR^5_+$-factorization, then it has a factorization with either the same zero pattern as the matrices $A$ and $B$ obtained
before, or the patterns given by applying a cyclic permutation to the rows of $A$ and the columns of $B$.
A simple algebraic computation then shows that the slack matrix $S$ above has no such decomposition hence this irregular hexagon has no $\RR_+^5$-lift.

\begin{figure} 
\includegraphics[scale=0.4]{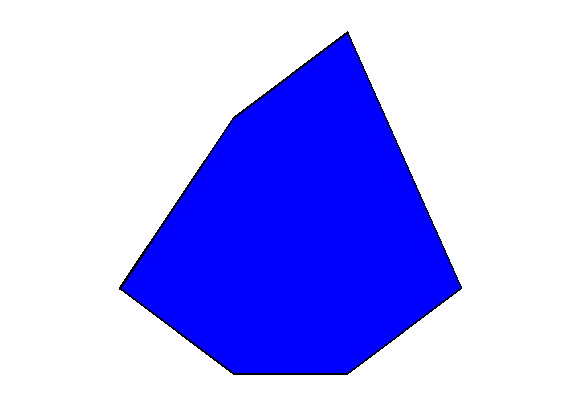}
\caption{Irregular hexagon with no $\RR^5_+$-lift.}
\label{fig:irregular_hexagon}
\end{figure}
\end{example}


\medskip

Symmetric lifts of polytopes are especially interesting to study since the automorphism group of a polytope is finite. We now show that there are polygons with $n$ sides for which a symmetric $\RR^k_+$-lift requires $k$ to be at least $n$. 

\begin{proposition} \label{prop:prime gons}
A regular polygon with $n$ sides where $n$ is either a prime number or a power of a prime number cannot admit a symmetric 
$\RR^k_+$-lift where $k < n$.
\end{proposition}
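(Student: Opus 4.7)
The plan is to use Theorem~\ref{thm:general_Yannakakis_symm} to reduce the question to a group-theoretic obstruction inside $S_k$. The rigid linear automorphisms of $\RR^k_+$ are exactly the $k \times k$ permutation matrices, so $\Aut(\RR^k_+) \cong S_k$, while $\Aut(P) = D_n$ for the regular $n$-gon $P$. Consequently, any symmetric $\RR^k_+$-lift of $P$ produces, via Theorem~\ref{thm:general_Yannakakis_symm}, a $(D_n, S_k)$-symmetric $\RR^k_+$-factorization $(A, B)$ of the slack matrix $S_P$, together with an associated group homomorphism $\rho \colon D_n \to S_k$.

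The main step is to show that the restriction of $\rho$ to the rotation subgroup $C_n \subseteq D_n$ is injective. Suppose instead that $\rho(c^m) = \textup{id}$ for some $0 < m < n$, where $c$ generates $C_n$. The equivariance $A \circ c^m = \rho(c^m) \circ A = A$, combined with the analogous identity $B \circ c^m = B$ which holds by the remark following Definition~\ref{def:symmetric factorization}, would force the rows $v$ and $c^m v$ of $S_P$ to agree for every vertex $v$, since $\langle A(c^m v), B(F) \rangle = \langle A(v), B(F) \rangle$ for every facet $F$. However, for $n \geq 3$ the rows of the slack matrix of a regular $n$-gon are distinguished purely by their zero patterns: row $v_i$ has zeros exactly at the two columns indexing the two facets containing $v_i$, and this unordered pair of adjacent facets determines $v_i$ uniquely. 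Since $C_n$ acts freely on the vertices, $c^m v \neq v$, and we obtain a contradiction.

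Once injectivity is established, $\rho$ embeds a cyclic group of order $n = p^r$ into $S_k$, so $\rho(c)$ is a permutation of order exactly $p^r$. Since the order of a permutation is the least common multiple of its cycle lengths, and each cycle length divides $p^r$, every cycle length of $\rho(c)$ is a power of $p$, and the lcm being $p^r$ forces at least one cycle to have length exactly $p^r$. Hence $k \geq p^r = n$, contradicting $k < n$.

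I expect the injectivity step to be the main obstacle, specifically in checking that the equivariance condition imposed on $A$ genuinely propagates to $B$ (via the polar invariance observation following Definition~\ref{def:symmetric factorization}) and hence translates into the row identity on $S_P$. The remaining permutation-theoretic step is immediate from the prime-power hypothesis, which is precisely where the assumption that $n$ be a prime or prime power enters the argument.
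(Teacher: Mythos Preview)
Your argument is correct and lands on the same endpoint as the paper --- embedding the rotation group $C_n$ into $S_k$ and then using the cycle-length observation for prime powers --- but you take an unnecessary detour. The paper works directly from Definition~\ref{def:symmetric lift}: a symmetric lift already carries a homomorphism $\varphi \mapsto f_\varphi$ from $\Aut(P)=D_n$ into $\Aut(\RR^k_+)=S_k$, and injectivity follows in one line, since $f_\varphi=\textup{id}$ together with $\pi\circ f_\varphi=\varphi\circ\pi$ on $K\cap L\cap\pi^{-1}(\ext(C))$ forces $\varphi$ to fix every vertex. By routing through Theorem~\ref{thm:general_Yannakakis_symm} instead, you import the \emph{properness} hypothesis of that theorem without addressing it, and you end up re-deriving injectivity through the factorization maps (where, incidentally, only the equivariance of $A$ is needed, not that of $B$). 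The properness gap is easy to close for nonnegative orthants --- a non-proper lift lives in a face $\RR^j_+$ with $j<k$, and the symmetry descends to that face --- but the cleaner repair is simply to skip the factorization altogether and read the homomorphism off the lift, as the paper does.
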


\begin{proof}
A  symmetric $\RR^k_+$-lift of a polytope $P$ implies the existence of an injective group homomorphism from $\Aut(P)$ to $\Aut(\RR^k_+)$. Since the rigid transformations of $\RR^k_+$ are the permutations of coordinates, $\Aut(\RR_+^k)$ is the symmetric group $S_k$. This implies that the cardinality of $\Aut(P)$ must divide $k!$. 

Let $P$ be a regular $p$-gon where $p$ is prime. Since $\Aut(P)$ has $2p$ elements, and the smallest $k$ such that $2p$ divides $k!$ is $p$ (since $p > 2$), we can never do better than a symmetric $\RR_+^p$-lift for $P$. 
If $P$ is a $p^t$-gon, then the homomorphism from $\Aut(P)$ to $S_k$ must send an element of order $p^t$ to an element whose order is a multiple of $p^t$. The smallest symmetric group with an element of order $p^t$ is $S_{p^t}$ and hence, $P$ cannot have a symmetric $\RR^k_+$-lift with $k < p^t$.
\end{proof}

In Example~\ref{ex:hexagon} we saw a $\RR^5_+$-lift of a regular hexagon, but notice that the accompanying factorization is not symmetric.

\begin{remark}
Ben-Tal and Nemirovski have shown in \cite{BenTalNemirovski} that a regular $n$-gon admits a $\RR^k_+$-lift where $ k = O(\log \,n)$. Combining their result with Proposition~\ref{prop:prime gons} provides a simple family of polytopes where there is an exponential gap between the sizes of the smallest possible symmetric and non-symmetric lift into nonnegative orthants. This provides a simple illustration of the impact of symmetry on the size of lifts, a phenomenon that was investigated in detail by Kaibel, Pashkovich and Theis in \cite{KaibelPashkovichTheis}.
\end{remark}

\section{Cone ranks of convex bodies} \label{sec:coneranks}

In Section~\ref{sec:mainthms} we established necessary and sufficient conditions for the existence of a $K$-lift of a given convex body $C \subset \RR^n$ for a fixed cone $K$. In many instances, the cone $K$ belongs to a family such as $( \RR^i_+ )_i$ or $( \PSD^i )_i$. In such cases, it becomes interesting to determine the smallest cone in the family that admits a lift of $C$. In this section, we study this scenario and develop the notion of {\em cone rank} of a convex body.

\subsection{Definitions and basics}
\label{subsec:cone ranks defs}

\begin{definition}
A {\em cone family} $\K = (K_i)_{i \in \NN}$ is a sequence of closed convex cones $K_i$ indexed by $i \in \NN$. The family $\K$ is said to be {\em closed} if for every $i \in \NN$
and every face $F$ of $K_i$ there exists $j \leq i$ such that $F$ is isomorphic to $K_j$.
\end{definition}

\begin{example}
\begin{enumerate}
\item[]
\item The set of nonnegative orthants $( \RR_+^i, \ i \in \NN  )$ form a closed cone family.
\item The family $( \PSD^i, i \in \NN )$ where $\PSD^i$ is the set of all $i \times i$ real symmetric positive semidefinite matrices is  closed since every face of $\PSD^i$ is isomorphic
to a $\PSD^j$ for $j \leq i$ \cite[Chapter II.12]{Barvinok}.
\item Recall that a $i \times i$ symmetric matrix $A$ is {\em copositive} if 
$x^TAx \geq 0$ for all $x \in \RR^i_+$. Let the cone of $i \times i$ symmetric copositive
matrices be denoted as $C_i$.  This family is not closed --- the set of all $i \times i$ matrices with zeroes on the diagonal and nonnegative off-diagonal entries form a face of $C_i$ that is isomorphic to the nonnegative orthant of dimension ${i \choose 2}$.
\item The dual of $C_i$ is the cone $C_i^*$ of all {\em completely positive} matrices which are exactly those symmetric $i \times i$ matrices that factorize as $BB^T$ for some $B \in \RR_+^{i \times k}$. 
The family $( C_i^*, \, i \in \NN )$ is also not closed since $\dim C^*_i = {i \choose 2}$ while $C_i^*$ 
has facets (faces of dimension ${i \choose 2} -1$) which therefore, cannot belong to the family.
\end{enumerate}
\end{example}

Recall the definition of a cone factorization of a nonnegative matrix in Definition~\ref{def:K-factorization of nonnegative matrices}.

\begin{definition} \label{def:cone ranks}
Let $\K=(K_i)_{i\in \NN}$ be a closed cone family.
\begin{enumerate}
\item The $\K$-rank of a nonnegative matrix $M$, 
denoted as $\rank_{\K}(M)$, is the smallest $i$ such that
$M$ has a $K_i$-factorization. If no such $i$ exists,  
we say that $\rank_{\K}(M) = +\infty$.
\item The $\K$-rank of a convex body $C \subset \RR^n$, denoted as 
$\rank_\K(C)$, is the 
smallest $i$ such that the slack operator $S_C$ has a $K_i$-factorization.
If such an $i$ does not exist, we say that $\rank_{\K}(C) = +\infty$.
\end{enumerate}
\end{definition}

In this paper, we will be particularly interested in the families $\K = ( \RR^i_+ )$ and 
$\K = ( \PSD^i )$. In the former case, we set 
$\rank_+(\cdot) := \rank_\K(\cdot)$ and call it {\em nonnegative rank}, and in the latter case we set $\rank_{\textup{psd}}(\cdot) := \rank_{\K}(\cdot)$ and call it {\em psd rank}. 
Our interest in cone ranks comes from their connection to the existence of cone lifts. The following is immediate from Theorem~\ref{thm:general_Yannakakis}.

\begin{theorem}
Let $\K = (K_i)_{i \geq 0}$ be a closed cone family and $C \subset \RR^n$ a convex body. 
Then $\rank_{\K}(C)$ is the smallest $i$ such that $C$ has a $K_i$-lift.
\end{theorem}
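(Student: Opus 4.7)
The plan is to prove the two inequalities $\rank_\K(C) \leq s$ and $s \leq \rank_\K(C)$, where $s$ denotes the smallest $i$ for which $C$ has a $K_i$-lift. One direction is essentially free: if $S_C$ has a $K_{\rank_\K(C)}$-factorization, then by the converse direction of Theorem~\ref{thm:general_Yannakakis}, $C$ has a $K_{\rank_\K(C)}$-lift, so $s \leq \rank_\K(C)$.

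For the reverse inequality, the first step is to observe that if $C$ has a proper $K_s$-lift, then Theorem~\ref{thm:general_Yannakakis} immediately produces a $K_s$-factorization of $S_C$, giving $\rank_\K(C) \leq s$. The delicate case is when the minimal lift $K_s \cap L$ is not proper, i.e., $L$ misses the interior of $K_s$. Here the strategy is to pass to the smallest face $F$ of $K_s$ containing $K_s \cap L$. By minimality, $L$ meets the relative interior of $F$, so $F \cap L = K_s \cap L$ is a \emph{proper} $F$-lift of $C$ (proper in the cone $F$, viewed as living in its span).

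The key step that makes the argument work is the closedness of the family $\K$: by definition there exists $j \leq s$ and a linear isomorphism $\varphi \colon F \to K_j$ of cones. Transporting the proper $F$-lift along $\varphi$ yields a proper $K_j$-lift of $C$, namely $K_j \cap \varphi(L)$ together with the linear map $\pi \circ \varphi^{-1}$. Applying Theorem~\ref{thm:general_Yannakakis} to this proper $K_j$-lift produces a $K_j$-factorization of $S_C$, from which we conclude $\rank_\K(C) \leq j \leq s$.

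I expect the only real obstacle to be a bookkeeping one: verifying that the smallest face $F$ containing $K_s \cap L$ is well-defined and that the resulting lift is genuinely proper with respect to $F$. Both are standard consequences of the fact that every point of a convex cone lies in the relative interior of a unique face, applied to any point in $K_s \cap L$; the minimality of $F$ then forces $L$ to meet the relative interior of $F$. With that in hand, the argument is a straightforward reduction to Theorem~\ref{thm:general_Yannakakis} via the closedness hypothesis.
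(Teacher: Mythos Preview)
Your proof is correct and follows essentially the same approach as the paper: both directions invoke Theorem~\ref{thm:general_Yannakakis}, and the non-proper case is handled by passing to a face of $K_s$ and using the closedness of $\K$. The only cosmetic difference is that the paper packages the second inequality as a contradiction (a non-proper minimal $K_j$-lift would yield a $K_l$-lift with $l<j$), whereas you argue directly by reducing to the minimal face and transporting to obtain a proper $K_j$-lift with $j\leq s$; your version is slightly more explicit about why the face lift is proper.
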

\begin{proof}
If $i = \rank_{\K}(C)$, then we have a $K_i$-factorization of the slack operator $S_C$, and therefore, by Theorem~\ref{thm:general_Yannakakis}, $C$ has a $K_i$-lift. Take the smallest $j$ for which 
$C$ has a $K_j$-lift and suppose $j<i$. If the lift was proper, we would get a $K_j$ factorization of $S_C$ for $j < i$, which contradicts that $i = \rank_{\K}(C)$. Therefore, the $K_j$-lift of $C$ is not proper, and $C$ has a lift to a proper face of $K_j$. Since $\K$ is closed, this would imply a $K_{l}$-lift of $C$ for $l<j$ contradicting the definition of $j$.
\end{proof}

In practice one might want to consider lifts to products of cones in a family. This could be dealt with by defining rank as the tuple of indices of the factors in such a product, minimal under some order. In this paper we are mostly working with the families $(\RR^i_+)$ and $(\PSD^i)$, and in the first case,  $\RR_+^n \times \RR_+^m = \RR_+^{n+m}$, and in the second case, $\PSD^n \times \PSD^m = \PSD^{m+n} \cap L$
where $L$ is a linear space. Therefore, in these situations, there is no incentive to consider lifts to products of cones.
However, if one wants to study lifts to the family of second order cones, considering products of cones makes sense. 

Having defined $\textup{rank}_{\K}(M)$ for a nonnegative matrix $M$, it is natural to ask how it compares with the usual rank of $M$. We now look at this relationship for the nonnegative and psd ranks of a nonnegative matrix. 

The nonnegative rank of a nonnegative matrix arises in several
contexts and has wide applications \cite{CohenRothblum}. As mentioned earlier, its relation to
$\RR^k_+$-lifts of a polytope was studied by Yannakakis
\cite{Yannakakis}.  Determining the nonnegative rank of a matrix is
NP-hard in general \cite{Vavasis}, but there are obvious upper and lower
bounds on it.

\begin{lemma}
\label{prop:trivial_bounds}
For any $M \in \RR_+^{p \times q}$, $\rank(M) \leq \rank_+(M) \leq
\min\{p,q\}$.
\end{lemma}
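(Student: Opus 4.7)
The plan is to establish the two inequalities separately, both by elementary linear algebra arguments that exploit the very definition of a nonnegative factorization.

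For the lower bound $\rank(M)\le\rank_+(M)$, I would let $k=\rank_+(M)$ and take a nonnegative factorization $M=AB$ with $A\in\RR_+^{p\times k}$ and $B\in\RR_+^{k\times q}$, which exists by the definition of nonnegative rank (assuming it is finite; the inequality is vacuous otherwise). Since the rank of a product is at most the rank of either factor, and each of $A,B$ has rank at most $k$, we get $\rank(M)\le k=\rank_+(M)$. The nonnegativity of the factors plays no role here; only the inner dimension matters.

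For the upper bound $\rank_+(M)\le\min\{p,q\}$, I would simply exhibit an explicit nonnegative factorization of the required size. If $p\le q$, write $M=I_p\cdot M$, where $I_p\in\RR_+^{p\times p}$ is the identity and $M\in\RR_+^{p\times q}$; this is a nonnegative factorization through $\RR_+^p$, so $\rank_+(M)\le p$. If instead $q\le p$, use $M=M\cdot I_q$ analogously to obtain $\rank_+(M)\le q$. Combining the two cases gives $\rank_+(M)\le\min\{p,q\}$.

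There is essentially no obstacle here: both bounds are immediate from the definitions, and the proof is a one-line argument in each direction. The only minor point worth noting is the degenerate case where $M$ has a row or column that is not in the nonnegative span of a set of size $\min\{p,q\}$; but since every row of $M$ is trivially a nonnegative combination of the standard basis vectors (and likewise for columns), no such pathology arises, and the trivial identity factorization always works.
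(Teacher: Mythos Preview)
Your argument is correct. The paper does not actually supply a proof of this lemma; it is stated as an ``obvious'' fact immediately after remarking that nonnegative rank is NP-hard to compute, so your write-up is more detailed than what appears there. The one-line arguments you give (rank of a product bounded by the inner dimension for the lower bound, and the trivial factorizations $M=I_pM$ or $M=MI_q$ for the upper bound) are exactly the standard justification, and your final parenthetical about pathologies is unnecessary but harmless.
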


Further, it is not possible in general, to bound
$\rank_+(M)$ by a function of $\rank(M)$.

\begin{example} 
\label{ex:rank_vs_nonneg_gap}
Consider the $n \times n$ matrix $M_n$ whose $(i,j)$-entry is
$(i-j)^2$. Then $\rank(M_n)=3$ for all $n$ since $M_n = A_n B_n$ where
row $i$ of $A_n$ is $(i^2 , -2i , 1)$ for $i=1, \ldots, n$ and column
$j$ of $B_n$ is $(1 , j , j^2)^T$ for $j=1,\ldots,n$.  If $M_n$ has a
$\RR^k_+$-factorization, then there exists $a_1, \ldots, a_n, b_1,
\ldots, b_n \in \RR^k_+$ such that $\langle a_i, b_j \rangle \neq 0$
for all $i \neq j$. Notice that for $i \neq j$ if $\supp(b_j)
\subseteq \supp(b_i)$ then $\langle a_i , b_i \rangle =0$ implies
$\langle a_i, b_j \rangle = 0$, and hence, all the $b_i$'s (and also
all the $a_i$'s) must have supports that are pairwise incomparable. By
Sperner's lemma, the largest antichain in the Boolean lattice of
subsets of $[k]$ has cardinality ${k \choose \lfloor \frac{k}{2}
  \rfloor}$, and thus we get that $n \leq {k \choose \lfloor
  \frac{k}{2} \rfloor}$. Therefore, $\rank_+(M_n)$ is bounded below by
the smallest integer $k$ such that $n \leq {k \choose \lfloor
  \frac{k}{2} \rfloor}$. For large $k$, we have ${k \choose \lfloor
  \frac{k}{2} \rfloor} \approx \sqrt{\frac{2}{\pi k}} \cdot 2^k$, and
the easy bound ${k \choose \lfloor \frac{k}{2} \rfloor} \leq 2^k$
yields $\rank_+(M_n) \geq \log_2 n$.
\end{example}

The psd rank of a nonnegative matrix is connected to rank and $\rank_+$ as follows.

\begin{proposition} \label{prop:upper and lower bounds for psd rank}
For any nonnegative matrix $M$
$$ \frac{1}{2}\sqrt{1+8 \,\rank{(M)}}-\frac{1}{2} \leq \rank_{psd}(M) \leq \rank_+(M).$$
\end{proposition}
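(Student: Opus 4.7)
The plan is to prove the two inequalities separately, both being direct consequences of elementary linear algebra once the right identification is made.

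For the upper bound $\rank_{psd}(M) \leq \rank_+(M)$, I would start from a minimal nonnegative factorization $M_{ij} = \sum_{l=1}^{r} a^i_l b^j_l$ with $r = \rank_+(M)$, $a^i, b^j \in \RR_+^r$, and lift it to a psd factorization by setting $A_i := \diag(a^i)$ and $B_j := \diag(b^j)$. These diagonal matrices are in $\PSD^r$ because their entries are nonnegative, and the trace inner product computes $\langle A_i, B_j \rangle = \sum_l a^i_l b^j_l = M_{ij}$. Thus $\{A_i\}, \{B_j\}$ form a $\PSD^r$-factorization of $M$, which gives $\rank_{psd}(M) \leq r$.

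For the lower bound, let $k := \rank_{psd}(M)$ and fix a $\PSD^k$-factorization $M_{ij} = \langle A_i, B_j \rangle$. The key observation is that $A_i$ and $B_j$ live in the space $\Sym^k$ of real symmetric $k \times k$ matrices, which has dimension $d := \binom{k+1}{2} = k(k+1)/2$. Picking any basis $E_1,\ldots,E_d$ of $\Sym^k$, I would expand $A_i = \sum_l \alpha_{il} E_l$ and $B_j = \sum_m \beta_{jm} E_m$, giving
\[
M_{ij} = \sum_{l,m} \alpha_{il}\,\langle E_l, E_m\rangle\,\beta_{jm} = (\alpha G \beta^T)_{ij},
\]
where $G$ is the Gram matrix of $E_1,\ldots,E_d$. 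This exhibits $M$ as a product of matrices of inner dimension $d$, so $\rank(M) \leq d = k(k+1)/2$. Solving the inequality $k(k+1)/2 \geq \rank(M)$ for $k$ via the quadratic formula yields $k \geq \tfrac{1}{2}\sqrt{1 + 8\,\rank(M)} - \tfrac{1}{2}$, which is the desired bound.

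Neither direction presents a genuine obstacle: the upper bound just exploits that nonnegative diagonal matrices are psd, and the lower bound is a clean dimension count. The only subtlety worth double checking is that the trace pairing is well-behaved on $\Sym^k$ (which it is, being nondegenerate), so the factorization $M = \alpha G \beta^T$ genuinely certifies $\rank(M) \leq d$ regardless of whether $G$ is invertible. This ensures the dimension $d = \binom{k+1}{2}$ controls $\rank(M)$ as claimed.
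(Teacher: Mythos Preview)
Your proof is correct and follows essentially the same approach as the paper. The upper bound via diagonal matrices is identical; for the lower bound the paper writes down an explicit vectorization of $\Sym^k$ (listing diagonal entries once and off-diagonal entries with a factor of $2$ on one side) rather than invoking an abstract basis and Gram matrix, but this is the same dimension count in slightly different clothing.
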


\begin{proof}
Suppose $a_1,\ldots,a_p,b_1,\ldots,b_q$ give a $\RR_+^r$-factorization of $M \in \RR^{p \times q}_+$. Then the diagonal matrices $A_i:= \diag(a_i)$ and
$B_j :=\diag(b_j)$ give a $\PSD^r$-factorization of $M$, and we obtain the second inequality.

Now suppose $A_1, \ldots A_p, B_1, \ldots, B_q $ give a $\PSD^r$-factorization of $M$.
Consider the vectors
$$a_i=(A_{11},\ldots,A_{rr},2A_{12},\ldots,2A_{1r},2A_{23}, \ldots, 2A_{(r-1)r})$$
and 
$$b_j=(B_{11},\ldots,B_{rr},B_{12},\ldots,B_{1r},B_{23}, \ldots, B_{(r-1)r})$$
in $\RR^{r+1 \choose 2}$ where $A = A_i$ and $B = B_j$. Then $\langle a_i,b_j \rangle =  \langle A_i, B_j \rangle= M_{ij}$ so $M$ has rank at most ${r+1 \choose 2}$. By solving for $r$
we get the desired inequality.
\end{proof}

There is a simple, yet important situation where 
$\rank(M)$ is an upper bound on $\rank_{\textup{psd}}(M)$.

\begin{proposition} 
\label{prop:squaring}
Take $M \in \RR^{p \times q}$ and let $M'$ be the nonnegative
matrix obtained from $M$ by squaring each entry of $M$. Then
$\rank_{psd}(M') \leq \rank(M)$. In particular, if $M$ is a $0/1$
matrix, $\rank_{psd}(M) \leq \rank(M)$.
\end{proposition}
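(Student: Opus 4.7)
The plan is to start from an ordinary rank factorization $M = AB$, where $A$ is $p \times r$ and $B$ is $r \times q$ with $r = \rank(M)$, and to build from it a positive semidefinite factorization of $M'$ of the same size. Writing $a_i \in \RR^r$ for the $i$-th row of $A$ and $b_j \in \RR^r$ for the $j$-th column of $B$, we have the scalar identity $M_{ij} = \langle a_i, b_j \rangle = a_i^T b_j$, so that $M'_{ij} = (a_i^T b_j)^2$.

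The key step is to promote the vectors $a_i$ and $b_j$ to rank-one symmetric matrices. Set $A_i := a_i a_i^T \in \PSD^r$ and $B_j := b_j b_j^T \in \PSD^r$. These are manifestly positive semidefinite, and a short trace computation shows
\[
\langle A_i, B_j \rangle = \tr(a_i a_i^T b_j b_j^T) = (a_i^T b_j)(b_j^T a_i) = (a_i^T b_j)^2 = M'_{ij}.
\]
Thus $(A_i)_{i=1}^p$ and $(B_j)_{j=1}^q$ give a $\PSD^r$-factorization of $M'$, which by Definition~\ref{def:K-factorization of nonnegative matrices} yields $\rank_{psd}(M') \leq r = \rank(M)$.

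For the final sentence, observe that when $M$ has entries in $\{0,1\}$, squaring entries is the identity map, so $M' = M$ and the bound $\rank_{psd}(M) \leq \rank(M)$ is immediate from what we just proved.

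There is essentially no obstacle here beyond noticing the right ``squaring trick''; the entire argument rests on the observation that taking outer products turns the bilinear form $\langle a_i, b_j \rangle$ into its square via the Frobenius inner product, while simultaneously landing in the PSD cone. No appeal to Theorem~\ref{thm:general_Yannakakis} or properness is needed, since the statement is purely about matrices.
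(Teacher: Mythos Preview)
Your proof is correct and follows essentially the same approach as the paper: both take a rank factorization $M_{ij}=\langle v_i,w_j\rangle$ with vectors in $\RR^r$, set $A_i=v_iv_i^T$ and $B_j=w_jw_j^T$, and observe that $\langle A_i,B_j\rangle=\langle v_i,w_j\rangle^2=M'_{ij}$. Your added trace computation and the explicit remark on the $0/1$ case are minor elaborations on the same argument.
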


\begin{proof}
Let $\rank(M)=r$ and $v_1,\ldots
,v_p, w_1,\ldots, w_q \in \RR^r$ be such that $\langle v_i, w_j
\rangle =M_{ij}$. Consider the matrices $A_i = v_i v_i^T$, $i=1,\ldots,p$ and $B_j = w_j w_j ^T$, $j=1,\ldots,q$ in $\PSD^r$. Then, since $\langle A_i, B_j \rangle = \langle v_i,w_j \rangle^2 =
M'_{ij}$, the matrix $M'$ has a $\PSD^r$-factorization.
\end{proof}

Barvinok has generalized the above result in a recent preprint \cite{Barvinok2012} to show that 
when the number of distinct entries in a nonnegative matrix $M$ does not exceed $k$, then the psd rank of $M$ is bounded above by 
${ {k-1 + \textup{rank}(M) } \choose {k-1}}$.
We now see that the gap between the nonnegative and psd rank
of a nonnegative matrix can become arbitrarily large. 

\begin{example} 
\label{ex:psd_rank_vs_nonneg_gap}
Let $E_n$ be the $n \times n$ matrix, $n \geq 2$, whose $(i,j)$-entry is
$i-j$. Then $\rank(E_n) = 2$ since the vectors $a_i := (i,-1)$, $i=1,\ldots,n$ and $b_j = (1,j)$, $j=1,\ldots,n$ have the property that $\langle a_i, b_j \rangle = i-j$. Therefore, by 
Proposition~\ref{prop:squaring}, the matrix $M_n$ with $(i,j)$-entry
equal to $(i-j)^2$ has psd rank two and an explicit $\PSD^2$-factorization of $M_n$ is given by the psd matrices 
$$A_i := \left( \begin{array}{rr} i^2 & -i \\ -i & 1 \end{array} \right), \,\,\,i=1,\ldots, n \mbox{ and } B_j := \left( \begin{array}{rr} 
1 & j \\ j & j^2 \end{array} \right), \,\,\,j=1,\ldots,n.$$
However, we saw in
Example~\ref{ex:rank_vs_nonneg_gap} that $\rank_+(M_n)$ grows with
$n$. A family of $n \times n$ matrices for which psd rank is $\textup{O}(\textup{log} \,n)$ and nonnegative rank at least $n^{\textup{constant}}$ is given in \cite{FMPTW}. For the family 
$\{ M_n \}$, the gap between rank and psd rank can become arbitrary large.
\end{example}

Thus, so far we have seen that the gap between $\rank(M)$ and $\rank_+(M)$ as well as the gap between $\rank_{\textup{psd}}(M)$ and $\rank_+(M)$ can be made arbitrarily large for nonnegative matrices $M$. Results in the next subsection will imply that there are nonnegative matrices for which the gap between $\rank(M)$ and $\rank_{\textup{psd}}(M)$ can also become arbitrarily large.


\subsection{Lower bounds on the nonnegative and psd ranks of polytopes}
\label{subsec:cone ranks of convex sets}

A well-known lower bound to the nonnegative rank of a matrix is the Boolean rank of 
the support of the matrix. The {\em support} of a matrix $M \in \RR^{p \times q}_+$, is the Boolean matrix 
$\supp(M)$ obtained by turning every non-zero entry in $M$ to a one. The rank of $\supp(M)$ in 
Boolean arithmetic (where $1+1 = 1$ and all other additions and multiplications among $0$ and $1$ are as for the integers) 
is called the {\em Boolean rank} of $\supp(M)$ (and also of $M$). In terms of factorizations, Boolean rank can be defined as follows.

\begin{definition} \label{def:Boolean rank}
The {\em Boolean rank} of a matrix $T \in \{0,1\}^{p \times q}$ is the least integer $r$ for which there exists
$A \in \{0,1\}^{p \times r}$ and $B \in \{0,1\}^{r \times q}$ such that $T = AB$ where all additions and multiplications are 
in Boolean arithmetic. 
\end{definition}

We will denote the Boolean rank of $\supp(M)$ as $\rank_B(M)$. It is easy to see that $\rank_B(M) \leq \rank_+(M)$. However, it is NP-hard to compute Boolean rank and 
most lower bounds to $\rank_+(M)$ are, in fact, lower bounds to $\rank_B(M)$. 

The ideas in Example~\ref{ex:rank_vs_nonneg_gap} provide an elegant way of thinking about 
lower bounds for the nonnegative rank of a polytope. Let $C$ be a polytope
and let $L(C)$ be its face lattice. If $C$ has a lift as $C=\pi(\RR_+^k \cap L)$, then the map $\pi^{-1}$ sends
faces of $C$ to faces of $\RR_+^k \cap L$. Since each face of $\RR_+^k \cap L$ is the intersection of a 
face of $\RR_+^k$ with $L$, the map $\pi^{-1}$ is an injection from $L(C)$ to the faces of $\RR_+^k$. 
The faces of $\RR^k_+$ can be identified with subsets of $[k]$ as they are of the form 
$F_J=\{ x \in \RR_+^k : \supp({x}) \subseteq J\}$ for
$J \subseteq [k]$. So the map $\pi^{-1}$ determines an embedding of the 
lattice $L(C)$ into $2^{[k]}$, the Boolean lattice of subsets of $[k]$.

\begin{theorem} \label{thm:boolean rank and lattice emdeddings}
For a polytope $C$, there is a Boolean factorization of $\supp(S_C)$ of intermediate dimension $k$  if and only if there is a lattice embedding of $L(C)$ into $2^{[k]}$. 
\end{theorem}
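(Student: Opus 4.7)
The plan is to translate Boolean factorizations into set systems describing vertex–facet non-incidences and to recognize these set systems as order embeddings of $L(C)$ into $2^{[k]}$. The starting observation is that $\supp(S_C)_{ij}=1$ exactly when vertex $v_i$ does not lie in facet $F_j$. A Boolean factorization $\supp(S_C)=AB$ with intermediate dimension $k$ therefore corresponds to an assignment of subsets $\alpha_i=\{\ell:A_{i\ell}=1\}\subseteq [k]$ and $\beta_j=\{\ell:B_{\ell j}=1\}\subseteq [k]$ such that
$$v_i\in F_j \iff \alpha_i\cap\beta_j=\emptyset.$$
Guided by the paragraph preceding the theorem, I read ``lattice embedding of $L(C)$ into $2^{[k]}$'' as the natural order embedding suggested by $\pi^{-1}$ when $C=\pi(\RR^k_+\cap L)$: an injection $\Phi:L(C)\to 2^{[k]}$ satisfying $F\leq F'\iff \Phi(F)\subseteq\Phi(F')$.

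For the forward direction, starting from a Boolean factorization I would define
$$\Phi(F):=[k]\setminus\bigcup_{j\in J(F)}\beta_j,$$
where $J(F)$ is the set of facets containing $F$. The factorization identity rewrites as: $v_i\in F$ iff $\alpha_i\cap\beta_j=\emptyset$ for every $j\in J(F)$, i.e., iff $\alpha_i\subseteq\Phi(F)$. Hence the vertex set $V(F)$ is recovered from $\Phi(F)$ as $\{v_i:\alpha_i\subseteq\Phi(F)\}$, and since a face is the convex hull of its vertices this immediately gives injectivity of $\Phi$. For the order: $F\leq F'$ forces $J(F)\supseteq J(F')$ and thus $\Phi(F)\subseteq\Phi(F')$; conversely $\Phi(F)\subseteq\Phi(F')$ forces $V(F)\subseteq V(F')$, hence $F\leq F'$.

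For the converse, starting from an order embedding $\Phi:L(C)\to 2^{[k]}$, I would set $\alpha_i:=\Phi(v_i)$ for each vertex (viewed as a $0$-face) and $\beta_j:=[k]\setminus\Phi(F_j)$ for each facet. Then
$$\alpha_i\cap\beta_j=\Phi(v_i)\setminus\Phi(F_j)=\emptyset \iff \Phi(v_i)\subseteq\Phi(F_j)\iff v_i\leq F_j \iff v_i\in F_j,$$
which is precisely the Boolean factorization identity for $\supp(S_C)$ with intermediate dimension $k$.

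The main obstacle is conceptual rather than technical: once the term ``lattice embedding'' is pinned down to order embedding (the only viable interpretation, since face lattices of polytopes are typically non-distributive and cannot sit as sublattices of $2^{[k]}$ in the algebraic sense), each direction reduces to a brief verification that the chosen family of subsets encodes the correct vertex–facet incidences. A minor side point, easily handled, is that after removing any index $\ell\in [k]$ with $\beta_\ell=\emptyset$ or $\alpha_\ell=\emptyset$ from the factorization we may assume $\bigcup_i\alpha_i=\bigcup_j\beta_j=[k]$, so that the extreme faces $\emptyset$ and $C$ are mapped by $\Phi$ to $\emptyset$ and $[k]$ respectively.
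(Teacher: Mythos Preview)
Your proof is correct and, in the direction embedding $\Rightarrow$ factorization, matches the paper's argument exactly: both take $\alpha_i=\Phi(v_i)$ and $\beta_j=[k]\setminus\Phi(F_j)$. Your interpretation of ``lattice embedding'' as order embedding is also the one the paper implicitly uses, since its own argument only verifies $H\subseteq F\iff\phi(H)\subseteq\phi(F)$.

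In the direction factorization $\Rightarrow$ embedding, you take a dual route. The paper defines $\phi(F):=\bigcup_{v\in F}\alpha_v$, building the image of a face from its vertices, and then argues the reverse order implication by picking $w\in H\setminus F$ and a facet $\tilde F$ separating them. You instead define $\Phi(F):=[k]\setminus\bigcup_{F_j\supseteq F}\beta_j$, building from the facets above $F$, and recover the vertex set of $F$ as $\{v_i:\alpha_i\subseteq\Phi(F)\}$ directly from the factorization identity. Both constructions are valid order embeddings (in fact $\phi(F)\subseteq\Phi(F)$ always holds); yours has the mild advantage that injectivity and the reverse order implication fall out immediately from the characterization of $V(F)$, without the separate facet-separation step the paper needs.
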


\begin{proof}
In this proof it is convenient to identify a subset $U$ of $[k]$ with its incidence vector in $\{0,1\}^k$ defined as having $1$ in position $i$ if and only if $i \in U$.
Given an embedding $\phi$ of $L(C)$ into $2^{[k]}$, a Boolean factorization $AB$ of $\supp(S_C)$ is 
gotten by taking the row of $A$ indexed by vertex $v$ of $C$ to be $\phi(v)$, and 
the column of $B$ indexed by facet $F$ of $C$ to be $[k] \backslash \phi(F)$. 
Then the $(v,F)$ entry of $\supp(M)$ is zero if and only if $\phi(v) \subseteq \phi(F)$ if and
only if $v \in F$.

Suppose now we have a Boolean factorization $AB$ of $\supp(S_P)$ of intermediate dimension $k$. For every face $F$ of $P$ define $$\phi(F) :=\bigcup_{v \in F} A(v)$$
where $A(v)$ denotes the row of $A$ indexed by vertex $v$.  Clearly $H \subseteq F$ implies $\phi(H) \subseteq \phi(F)$. To see the reverse inclusion, suppose $H \not \subseteq F$. Pick a vertex $w \in H\setminus F$ and a facet $\tilde{F}$ containing $F$ but not $w$. Let $B(\tilde{F})$ denote the column of $B$ indexed by facet $\tilde{F}$. Since $A(w) \cap B(\tilde{F}) \not = \emptyset$, we have $\phi(H) \cap B(\tilde{F}) \not = \emptyset$. On the other hand, for all $v \in F$, we have $v \in \tilde{F}$ which implies that $A(v) \cap B(\tilde{F})= \emptyset$ and so, $\phi(F) \cap B(\tilde{F})= \emptyset$. Therefore, $\phi(H) \not \subseteq \phi(F)$, completing the proof.
\end{proof}

Theorem~\ref{thm:boolean rank and lattice emdeddings} immediately yields a 
lower bound on the nonnegative rank of a polytope based solely on the facial structure of the polytope.

\begin{corollary} 
\label{cor:lattice homomorphism}
Let $C \subset \RR^n$ be a polytope and $k$ the smallest integer such that there exists an embedding of 
the face lattice $L(C)$ into the Boolean lattice $2^{[k]}$. Then $\rank_+(C) \geq k$. 
\end{corollary}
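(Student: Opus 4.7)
The plan is to chain together three facts: (i) an $\RR_+^r$-factorization of the slack matrix $S_C$ induces a Boolean factorization of its support matrix $\supp(S_C)$ of the same intermediate dimension $r$; (ii) Theorem~\ref{thm:boolean rank and lattice emdeddings} then converts that Boolean factorization into a lattice embedding of $L(C)$ into $2^{[r]}$; (iii) minimality of $k$ forces $k \leq r$. Taking $r = \rank_+(C)$ yields the desired inequality.

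More concretely, I would start by setting $r := \rank_+(C)$, so by definition there exist $a_1,\ldots,a_v \in \RR_+^r$ and $b_1,\ldots,b_f \in \RR_+^r$ with $\langle a_i,b_j\rangle = (S_C)_{ij}$. Forming the matrices $A \in \RR_+^{v \times r}$ with rows $a_i$ and $B \in \RR_+^{r \times f}$ with columns $b_j$, we obtain $S_C = AB$. Replacing each nonzero entry of $A$ and $B$ by $1$ (and using that a sum of nonnegative reals vanishes iff each summand vanishes), we see that $\supp(A)\,\supp(B) = \supp(S_C)$ when the products and sums are interpreted in Boolean arithmetic. Thus $\supp(S_C)$ admits a Boolean factorization of intermediate dimension $r$.

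Applying Theorem~\ref{thm:boolean rank and lattice emdeddings} in the ``Boolean factorization $\Rightarrow$ embedding'' direction, we conclude that there is a lattice embedding of $L(C)$ into $2^{[r]}$. Since $k$ was defined as the smallest integer for which such an embedding exists, we obtain $k \leq r = \rank_+(C)$, which is exactly the claimed bound.

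The proof requires no technical innovation; the only point that warrants a moment of care is the step from a real nonnegative factorization $S_C = AB$ to a Boolean factorization of $\supp(S_C)$, which relies on the elementary observation that $\sum_\ell A_{i\ell} B_{\ell j} > 0$ if and only if some summand $A_{i\ell} B_{\ell j}$ is positive, i.e.\ some $\ell$ has both $A_{i\ell} > 0$ and $B_{\ell j} > 0$. All other ingredients are already established in the excerpt, so this is essentially a packaging of Theorem~\ref{thm:boolean rank and lattice emdeddings} with the inequality $\rank_B(\cdot) \leq \rank_+(\cdot)$.
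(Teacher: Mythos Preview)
Your argument is correct and is exactly the approach the paper takes: the text immediately preceding the corollary notes that $\rank_B(M)\leq\rank_+(M)$ and then states that Theorem~\ref{thm:boolean rank and lattice emdeddings} ``immediately yields'' the bound, without giving a separate proof. Your write-up simply spells out those two steps (passing from a nonnegative factorization of $S_C$ to a Boolean one of $\supp(S_C)$, then invoking Theorem~\ref{thm:boolean rank and lattice emdeddings}), which is precisely what the paper leaves implicit.
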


The Boolean rank of a $0/1$ matrix is also called its {\em rectangle covering number}. 
Theorem 2.9 in \cite{FKPT} phrases a version of the above results in terms of rectangle covering number.

\begin{corollary} \label{cor:bounds on nonneg rank examples}
If $C \subset \RR^n$ is a polytope, then the following hold:
\begin{enumerate}
\item Let $p$ be the size of a largest {\em antichain} of faces of $C$ (i.e., a largest set of faces such that no one is contained in another). Then $\rank_+(C)$ is
bounded below by the smallest $k$ such that $p \leq {k \choose \lfloor \frac{k}{2} \rfloor}$;
\item (Goemans \cite{Goemans}) Let $n_C$ be the number of faces of $C$, then $\rank_+(C) \geq \log_2(n_C).$
\end{enumerate}
\end{corollary}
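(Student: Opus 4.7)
The plan is to derive both bounds directly from Corollary~\ref{cor:lattice homomorphism}, which guarantees that if $k = \rank_+(C)$, then the face lattice $L(C)$ embeds into the Boolean lattice $2^{[k]}$ via an order-preserving injection $\phi$ that also reflects containment (the reverse inclusion being precisely what the second half of the proof of Theorem~\ref{thm:boolean rank and lattice emdeddings} establishes). Once this embedding is in hand, both statements become purely combinatorial facts about $2^{[k]}$.

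For part~(1), I would argue that since $\phi$ is an order-embedding, the image of any antichain in $L(C)$ is an antichain in $2^{[k]}$ of the same size. By Sperner's theorem, the largest antichain in $2^{[k]}$ has cardinality $\binom{k}{\lfloor k/2 \rfloor}$. Therefore, if $C$ contains an antichain of $p$ faces, then $p \leq \binom{k}{\lfloor k/2 \rfloor}$, and rearranging gives the stated lower bound on $k = \rank_+(C)$.

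For part~(2), I would simply use injectivity of $\phi$: it forces $n_C = |L(C)| \leq |2^{[k]}| = 2^k$, and taking logarithms yields $\rank_+(C) = k \geq \log_2(n_C)$.

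Neither step is hard; the substantive content has already been packaged into Corollary~\ref{cor:lattice homomorphism}, so the main thing to be careful about is invoking the order-reflection property of $\phi$ in part~(1) rather than just order-preservation, since the latter alone would not carry antichains to antichains.
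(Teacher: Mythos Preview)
Your proposal is correct and follows essentially the same route as the paper: both parts are deduced from Corollary~\ref{cor:lattice homomorphism}, invoking Sperner's theorem for part~(1) and a straight cardinality count for part~(2). Your explicit remark that order-reflection (not just order-preservation) is what carries antichains to antichains is a useful clarification the paper leaves implicit in the phrase ``lattice embeddings preserve antichains.''
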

\begin{proof}
The first bound follows from Corollary~\ref{cor:lattice
  homomorphism} since lattice embeddings preserve
antichains, and the size of the largest antichain of the Boolean
lattice $2^{[k]}$ is ${k \choose \lfloor \frac{k}{2} \rfloor}$
(Sperner's lemma). The second bound follows from the easy fact that
any embedding of $L(C)$ into $2^{[k]}$ requires $\# L(C) \leq
2^k$.
\end{proof}

Note that a (weaker) version of the first bound can be found in
  \cite[Corollary 4]{GillisGlineur} with the size of the largest
  antichain replaced by the number of vertices.  As mentioned, the
  second lower bound essentially appears in \cite{Goemans}. Further lower
  bounds for the nonnegative rank of a polytope are overviewed in
  \cite{FKPT}. The two bounds in Corollary~\ref{cor:bounds on nonneg rank examples}
are in general different. For instance, if $C$ is a square in the
plane, the Goemans bound says that $\rank_+(C) \geq 
\textup{log}_2(10) \sim 3.32$ while the antichain bound says that
$\rank_+(C) \geq 4$, and thus both give the same value after rounding
up. For $C$ a three-dimensional cube, $\textup{log}_2(28) = 4.807355$
while the maximum size of an antichain of faces is 12 (take the 12
edges) and hence, the antichain lower bound is $6$. Although the
antichain bound can be better than Goemans' (as this example shows),
asymptotically they are roughly equivalent. To see this, we notice
that if $ p \approx \binom{k}{\lfloor \frac{k}{2} \rfloor}$, then an
asymptotic expansion yields $k \approx C_1 + \log_2 p + \frac{1}{2}
\log_2 (C_2 + 2 \log p)$, for some small explicit constants $C_1$ and
$C_2$. Since $p$ (antichain size) is always less than or equal to
the number of faces $n_C$, we have $\log_2 p \leq \log_2 n_C$, and
thus the antichain bound is at most an additive logarithmic term
greater than the Goemans bound.

We close the study of nonnegative ranks with a family of polytopes for which all slack matrices have
constant rank while their nonnegative ranks can grow arbitrarily high.

\begin{example}\label{ex:ngon_nonn}
Let $S_n$ be the slack matrix of a regular $n$-gon in the plane. Then
$\rank(S_n)= 3$ for all $n$, while, by Corollary~\ref{cor:bounds on
  nonneg rank examples}, $\rank_+(S_n) \geq \log_2(n)$.
\end{example}

The above lower bound is of optimal order since a regular $n$-gon has a 
$\RR^k_+$-lift where $k = O(\log_2(n))$ by the results in \cite{BenTalNemirovski}. 

The psd rank of a nonnegative matrix or convex body seems to be even
harder to study than nonnegative rank and no techniques are known for
finding upper or lower bounds for it in general. Here we will derive some coarse complexity bounds
by providing bounds for algebraic degrees. To derive our results, we begin with a rephrasing of part of
\cite[Theorem 1.1]{TarskiSeidenberg} about quantifier elimination.


\begin{theorem} 
\label{thm:renegar}
Given a formula of the form 
$$\exists \,\,y \in \RR^{m-n} \,:\, g_i(x,y) \geq 0 \,\,\,\,\forall \,i=1, \ldots, s$$ 
where $x \in \RR^n$ and $g_i \in \RR[x,y]$ are polynomials of degree at most $d$, 
there exists a quantifier elimination method that produces a quantifier free formula of the form 
\begin{equation} \label{quantifier-free formula}
\bigvee_{i=1}^{I} \bigwedge_{j=1}^{J_i} (h_{ij}(x) \,\Delta_{ij} \,0)
\end{equation} 
where $h_{ij} \in \RR[x]$, $\Delta_{ij} \in \{>, \geq, =, \neq, \leq, < \}$ such that 
$$I \leq (sd)^{{K}n(m-n)}, \,\,J_i \leq (sd)^{{K}(m-n)}$$ 
and the degree of  $h_{ij}$ is at most $(sd)^{{K}(m-n)}$, where 
$K$ is a constant.
\end{theorem}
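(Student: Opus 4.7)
The plan is to obtain these quantifier elimination bounds via the \emph{critical point method} from real algebraic geometry, not through elementary Tarski--Seidenberg or cylindrical algebraic decomposition, since those only yield doubly exponential bounds, whereas the theorem demands singly exponential control in $m-n$ (with a separate factor of $n$ in the outermost exponent). First I would normalize the input: introducing slack variables $z_1,\dots,z_s$ and replacing each inequality $g_i(x,y) \geq 0$ by the equation $g_i(x,y) - z_i^2 = 0$, I turn the existential formula into the assertion that a certain real algebraic set $V_x \subset \RR^{m-n+s}$ is nonempty. Degrees stay $O(d)$, the number of equations stays $s$, and the number of quantified variables becomes $m-n+s$; the extra $s$ gets absorbed into the constant $K$.

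Next I would replace emptiness-testing by critical-point enumeration. Fix a generic linear form $\ell(y,z)$; after a suitable compactification or infinitesimal deformation, any nonempty closed real algebraic set intersects the critical locus of the restriction of $\ell$ to it. Thus $V_x \neq \emptyset$ becomes equivalent to the critical system---the equations defining $V_x$ together with the vanishing of the maximal minors of the associated Jacobian---having a real zero. This parametric system consists of polynomials in $(y,z)$ of degree $O(d)$ with coefficients polynomial in $x$ of degree $O(1)$, and, crucially, its complex zero set is zero-dimensional for generic $x$. I would then eliminate $y$ and $z$ one variable at a time using parametric subresultants (or a parametric multivariate resultant): each elimination step multiplies the degree in $x$ by at most $O(d)$ and the number of output polynomials by at most $O(sd)$, so after $m-n+s$ rounds one is left with a family of at most $(sd)^{K(m-n)}$ polynomials $h_{ij}(x)$, each of degree at most $(sd)^{K(m-n)}$.

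Sign determination (Ben-Or--Kozen--Reif, or the Thom encoding) applied to these eliminated polynomials produces, within each cell of sign-constancy in $\RR^n$, a Boolean combination of at most $(sd)^{K(m-n)}$ sign conditions on the $h_{ij}$, matching the stated bound on $J_i$. To bound the number of disjuncts $I$, I would invoke the Milnor--Thom--Warren bound: $N$ polynomials in $n$ variables of degree $D$ carve $\RR^n$ into at most $(ND)^{O(n)}$ sign cells. Substituting $N, D \leq (sd)^{K(m-n)}$ and adjusting the constant $K$ yields $I \leq (sd)^{K n (m-n)}$, after which the final quantifier-free formula (\ref{quantifier-free formula}) is obtained by taking the disjunction over these cells of the conjunction of sign conditions certifying that one cell lies in the projection.

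The main obstacle is preserving single-exponentiality through the elimination. Iterated projection à la CAD roughly squares the degree at each step, producing a tower of exponentials of height $m-n$; the critical-point reduction is what rescues the bound, because the resulting parametric system is zero-dimensional for generic $x$, so Bezout-style estimates give only a linear-in-number-of-steps multiplicative blowup of the degree. The asymmetric exponent---$n(m-n)$ in $I$ versus only $(m-n)$ in the degrees and in $J_i$---then emerges from separating the contribution of the $m-n$ eliminated variables (which enter via Bezout on the critical system) from that of the $n$ parameters (which enter only through the Milnor--Thom--Warren cell count at the end).
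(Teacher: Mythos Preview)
The paper does not prove this theorem at all; it is simply quoted as a rephrasing of a known result from the literature (cited as \cite[Theorem~1.1]{TarskiSeidenberg}), and is then used as a black box in Proposition~\ref{prop:degree bound}. So there is no argument in the paper to compare your sketch against.

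As for the sketch itself: the overall strategy---critical point method rather than CAD, followed by a Milnor--Thom cell count in the parameter space---is indeed how singly-exponential quantifier elimination bounds are obtained, and your explanation of why the exponent splits as $n(m-n)$ for $I$ versus $(m-n)$ for the degrees and $J_i$ is on the mark. But there is a genuine gap in your normalization step. Introducing slack variables $z_1,\dots,z_s$ replaces $m-n$ quantified variables by $m-n+s$ quantified variables, and $s$ is an \emph{input parameter}, not a constant. The critical-point and elimination machinery you then run produces degree and count bounds of the shape $d^{O(m-n+s)}$, which contains a factor $d^{O(s)}$; this is not dominated by $(sd)^{K(m-n)} = s^{K(m-n)} d^{K(m-n)}$ for any absolute constant $K$ (take, say, $s$ comparable to $d$ and $m-n$ fixed). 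So the sentence ``the extra $s$ gets absorbed into the constant $K$'' is false as written, and the argument as stated does not deliver the claimed bounds.

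The actual proofs in the literature avoid slacking and work directly with sign conditions on the $g_i$: the key input is that $s$ polynomials of degree $d$ in $N$ variables realize at most $(sd)^{O(N)}$ sign conditions (with $s$ in the base, not the exponent), and the critical-point construction is carried out on infinitesimally thickened level sets of the $g_i$ themselves rather than on a slacked variety in $m-n+s$ variables. If you replace your first step by that device, the rest of your outline goes through.
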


The following result of Renegar on {\em hyperbolic programs} offers a semialgebraic description by 
$k$ polynomial inequalities of degree at most $k$, of an affine 
slice of a $\PSD^k$ (a spectrahedron) that contains a positive definite matrix.

\begin{theorem} \cite{RenegarHP} \label{thm:Renegar derivatives}
Let $Q = \{ z \in \RR^m \,:\, C + \sum z_i A_i \succeq 0 \}$ be a spectrahedron with 
$E := C+\sum z'_i A_i \succ 0$ for some $z' \in Q$, and $C, A_i$ are symmetric matrices of size $k \times k$. Then $Q$ is a semialgebraic set described by $g^{(i)}(z) \geq 0$ for $i=1,\ldots,k$ 
where $g^{(0)}(z) := \textup{det} (C + \sum z_i A_i)$ and $g^{(i)}(z)$ is the $i$-th Renegar derivative of $g^{(0)}(z)$ in direction $E$.
\end{theorem}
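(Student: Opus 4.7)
The plan is to view $g^{(0)}(z) = \det(C + \sum_i z_i A_i)$ as a hyperbolic polynomial of degree $k$ with hyperbolicity direction $E$, and to exploit the classical description of its closed hyperbolicity cone by the signs of successive directional derivatives. Writing $M(z) := C + \sum_i z_i A_i$, I would first establish hyperbolicity: for every $z$, the univariate polynomial $p_z(t) := g^{(0)}(z + tE) = \det(M(z) + tE)$ factors as $\det(E) \cdot \det(tI + E^{-1/2} M(z) E^{-1/2})$, so its roots in $t$ are the negatives of the (real) eigenvalues $\mu_1(z), \ldots, \mu_k(z)$ of a symmetric matrix, and its leading coefficient $\det(E)$ is positive.

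Next, I would translate spectrahedral membership into a root condition: $z \in Q$ is equivalent to $M(z) \succeq 0$, hence to every $\mu_j(z) \geq 0$, hence (since $p_z(t) = \det(E) \prod_j (t + \mu_j(z))$ with $\det(E) > 0$) to every coefficient of $p_z$ being non-negative. Matching the Taylor expansion $p_z(t) = \sum_{i=0}^{k} \frac{t^i}{i!} g^{(i)}(z)$ with the factored form yields the identity $g^{(i)}(z) = i!\, \det(E) \, e_{k-i}(\mu_1(z), \ldots, \mu_k(z))$, where $e_j$ denotes the $j$-th elementary symmetric polynomial. Because $e_j$ evaluated at real numbers is non-negative for every $j$ if and only if each of those numbers is non-negative, the conditions $g^{(i)}(z) \geq 0$ taken over a complete range of $i$ characterize $z \in Q$ exactly. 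The top derivative $g^{(k)}(z) = k!\, \det(E)$ is a positive constant, so its inequality is automatic, leaving $k$ effective polynomial inequalities in $z$ of degree at most $k$.

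The principal obstacle I anticipate is purely notational: reconciling the convention and normalization for the Renegar derivatives in the cited source with the range $i = 1, \ldots, k$ appearing in the statement. The substantive mathematical content, namely that a real-rooted polynomial with positive leading coefficient has only non-positive roots precisely when all elementary symmetric polynomials of those roots are non-negative, is elementary and reduces to expanding $\prod_j(t + \mu_j)$ and reading off coefficients.
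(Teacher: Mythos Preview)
The paper does not prove this theorem at all; it is quoted with attribution to Renegar and then invoked as a black box in the proof of Proposition~\ref{prop:degree bound}. So there is no ``paper's own proof'' to compare against.

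That said, your outline is correct and is essentially the classical argument behind Renegar's result. The key points --- hyperbolicity of $\det$ in any positive-definite direction, the identification $g^{(i)}(z) = i!\,\det(E)\, e_{k-i}(\mu_1(z),\ldots,\mu_k(z))$ via the Taylor expansion of $\det(M(z)+tE)$, and the equivalence ``all $\mu_j \geq 0$ $\Leftrightarrow$ all $e_j(\mu) \geq 0$ for real $\mu$'' (immediate since $\prod_j(t+\mu_j)$ with nonnegative coefficients is strictly positive for $t>0$) --- are all sound. Your caveat about the direction $E$ being a matrix rather than a vector in $\RR^m$ is well taken: the intended reading is that one differentiates $\det$ on the space of symmetric matrices in the direction $E$ and then restricts to the affine slice $M(z)=C+\sum z_iA_i$, exactly as you do. The index range $i=1,\ldots,k$ in the statement corresponds to $e_{k-1},\ldots,e_0$, with $e_0\equiv 1$ giving the trivially satisfied top derivative and the determinant itself ($i=0$) being recoverable from the others or simply included as one of the $k$ inequalities depending on convention; this is the harmless normalization ambiguity you anticipated.
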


With these two results, we can give a lower bound on the psd rank of
a full-dimensional, convex, semi-algebraic set $C$. The {\em Zariski closure} of the 
boundary of $C$ is a hypersurface in $\RR^n$ since the boundary of $C$ has codimension one. We define the {\em degree of $C$} to be the degree of a minimal degree (nonzero) polynomial whose zero set is the 
Zariski closure of the boundary of $C$. By construction, this polynomial vanishes on the boundary of $C$.

\begin{proposition} \label{prop:degree bound}
If $C \subseteq \RR^n$ is a full-dimensional convex semialgebraic set with a $\PSD^k$-lift, 
then the degree of $C$ is at most $k^{O(k^2n)}$.
\end{proposition}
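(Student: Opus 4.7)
The plan is to combine the two quoted results (Renegar's semialgebraic description of spectrahedra, Theorem~\ref{thm:Renegar derivatives}, and the quantifier elimination bound, Theorem~\ref{thm:renegar}) with the fact that $\PSD^k$ sits in an ambient space of dimension $\binom{k+1}{2} = O(k^2)$. Starting from a $\PSD^k$-lift $C = \pi(\PSD^k \cap L)$, we may first reduce to the case where the lift is proper: if it is not, then the affine space $L$ meets only a proper face of $\PSD^k$, which is isomorphic to $\PSD^{k'}$ for some $k' \leq k$, so we may replace $k$ by $k'$ and assume the spectrahedron $\PSD^k \cap L$ has a positive definite point.

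Next I parametrize $L \subseteq \RR^{\binom{k+1}{2}}$ affinely by variables $(x,y)$ where $x \in \RR^n$ records the image under $\pi$ and $y \in \RR^{m-n}$ are the remaining fiber coordinates, with $m \leq \binom{k+1}{2}$. By Theorem~\ref{thm:Renegar derivatives}, the spectrahedron $\PSD^k \cap L$ is cut out by $k$ polynomial inequalities $g^{(1)}(x,y) \geq 0, \ldots, g^{(k)}(x,y) \geq 0$ of degree at most $k$. Thus
\[
C \;=\; \bigl\{\, x \in \RR^n : \exists\, y \in \RR^{m-n} \text{ with } g^{(i)}(x,y) \geq 0,\ i=1,\ldots,k \,\bigr\}.
\]
Now apply Theorem~\ref{thm:renegar} with $s=k$ and $d=k$ to eliminate the variables $y$. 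This produces a quantifier-free formula $\bigvee_{i=1}^I \bigwedge_{j=1}^{J_i} (h_{ij}(x)\,\Delta_{ij}\,0)$ with
\[
I \leq k^{2Kn(m-n)}, \qquad J_i \leq k^{2K(m-n)}, \qquad \deg h_{ij} \leq k^{2K(m-n)}.
\]

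To conclude, observe that the boundary of $C$ is contained in the real zero set of $\prod_{i,j} h_{ij}(x)$, so its Zariski closure is cut out by a polynomial of degree at most $I \cdot \max_i J_i \cdot \max_{ij} \deg h_{ij}$. Substituting the bounds above and using $m - n \leq \binom{k+1}{2} = O(k^2)$, this product is bounded by $k^{O(n k^2)} \cdot k^{O(k^2)} \cdot k^{O(k^2)} = k^{O(k^2 n)}$, which is the claimed bound on the degree of $C$. The main technical point to get right is the reduction to a proper lift (so that Renegar applies) and the bookkeeping in combining the exponents from quantifier elimination with the ambient dimension bound $O(k^2)$ for $\PSD^k$; the rest is a direct concatenation of the two cited theorems.
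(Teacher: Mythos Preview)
Your proof is correct and follows essentially the same approach as the paper: reduce to a proper lift, parametrize the spectrahedron by $(x,y)$ with $x$ the image coordinates, apply Renegar's derivative description to get $k$ inequalities of degree at most $k$, eliminate $y$ via the cited quantifier-elimination bound, and multiply all resulting polynomials to get a single polynomial vanishing on $\partial C$; the bound $m-n\le\binom{k+1}{2}=O(k^2)$ then gives the exponent $O(k^2 n)$. The paper's arithmetic is organized slightly differently (it first counts $(k^2)^{K(m-n)(n+1)}$ polynomials of degree at most $(k^2)^{K(m-n)}$ before multiplying), but the content is identical.
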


\begin{proof}
We may assume that $C$ has a proper $\PSD^k$-lift since otherwise we
can restrict to a face of $\PSD^k$ and obtain a $\PSD^r$-lift with $r
< k$. Hence there is an affine subspace $L$ that intersects the
interior of $\PSD^k$ such that $C = \pi(\PSD^k \cap L)$.  This implies
that there exist $k \times k$ symmetric matrices $A_1, \ldots, A_n,
B_{n+1}, \ldots, B_m$ and a positive definite matrix $A_0$ such that
$$L = \left\{A_0 + \sum x_i A_i + \sum y_j B_j, \,\,\,\,(x, y) \in \RR^n \times \RR^{m-n} \right\},$$
and $\pi(A_0 + \sum x_i A_i + \sum y_j B_j)=(x_1,\ldots, x_n)$. 
Let $$Q = \left\{ (x,y) \in \RR^n \times \RR^{m-n} \,:\, A_0 + \sum x_i A_i + \sum y_j B_j \succeq 0 \right\}.$$ 
Then 
by Theorem~\ref{thm:Renegar derivatives}, $Q$ is a 
basic semialgebraic set cut out by the $k$ Renegar derivatives, $g_i (x,y) \geq 0$, 
of $\textup{det}(A_0 + \sum x_i A_i + \sum y_j B_j)$, with the degree of each $g_i$ at most $k$. 

Since $C$ is the projection of $Q$, by the Tarski-Seidenberg transfer
principle \cite{MarshallBook}, $C$ is again semialgebraic and has a quantifier
free formula of the type (\ref{quantifier-free formula}).  Hence the
boundary of $C$ is described by at most $(k^2)^{{K}(m-n)(n+1)}$
polynomials of degree at most $(k^2)^{{K}(m-n)}$ where $K$ is a
constant. Since $m < {k+1\choose 2} \leq k^2$, by multiplying all
those polynomials together we get a polynomial vanishing on the
boundary of $C$ of degree at most $(k)^{{2K}(k^2-n)(n+2)}=
k^{O(k^2n)}$.
\end{proof}

The above result provides bounds on the psd ranks of polytopes.

\begin{corollary} \label{cor:max facets with fixed psd rank}
If $C \subset \RR^n$ is a full-dimensional polytope whose slack matrix has psd rank $k$, then $C$ has
at most $k^{O(k^2n)}$ facets.
\end{corollary}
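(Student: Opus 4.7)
The plan is to reduce this corollary to Proposition~\ref{prop:degree bound} by identifying the degree of a polytope $C$, in the sense defined just before that proposition, with its number of facets. Once this identification is made, the bound $k^{O(k^2n)}$ transfers immediately.

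First, since the slack matrix of $C$ has psd rank $k$, Theorem~\ref{thm:general_Yannakakis_for_polytopes} produces a $\PSD^k$-lift of $C$. Being a polytope, $C$ is in particular a full-dimensional convex semialgebraic set, so Proposition~\ref{prop:degree bound} applies and gives that the degree of $C$ is bounded by $k^{O(k^2n)}$.

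The key geometric step is to compute the degree of $C$ when $C$ is a full-dimensional polytope. The boundary of $C$ is the union of its facets $F_1, \ldots, F_f$, each of which is contained in a unique affine hyperplane $H_i = \{x : \ell_i(x) = 0\}$ for a nonzero affine linear form $\ell_i$. Taking Zariski closures, the Zariski closure of $\partial C$ equals $H_1 \cup \cdots \cup H_f$, since each $F_i$ is full-dimensional inside $H_i$ and is therefore Zariski dense in $H_i$. Thus the product $\ell_1 \cdots \ell_f$ is a polynomial of degree $f$ that vanishes on this Zariski closure, and since the $\ell_i$ are irreducible and pairwise non-proportional (the facets lie in distinct hyperplanes), no polynomial of smaller degree can vanish on the whole union. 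Hence the degree of $C$ is exactly $f$, the number of facets.

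Combining the two steps, $f \leq k^{O(k^2n)}$, which is the stated bound. The only step that requires any care is the equality between the degree of $C$ and the number of facets; the main substance of the corollary is already contained in Proposition~\ref{prop:degree bound}, and I do not anticipate a genuine obstacle, only the small verification that irredundant facet-defining hyperplanes give an irredundant factorization of the minimal vanishing polynomial.
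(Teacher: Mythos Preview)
Your proposal is correct and follows essentially the same route as the paper: pass from psd rank $k$ to a $\PSD^k$-lift, invoke Proposition~\ref{prop:degree bound} to bound the degree of $C$, and then identify the degree of a full-dimensional polytope with its number of facets via the product of the facet-defining linear forms. The paper's proof is more terse on the last point, but the argument is the same.
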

\begin{proof}
If the psd rank of the slack matrix of $C$ is $k$ then $C$ has a
$\PSD^k$-lift. By Proposition~\ref{prop:degree bound} the degree of
$C$ is then at most $k^{O(k^2n)}$. Since the minimal degree polynomial
that vanishes on the boundary of a polytope is the product of the
linear polynomials that vanish on each of its facets, the degree of
$C$ is the number of facets of $C$.
\end{proof}

This shows that even for slack matrices of polytopes there is no function of rank that bounds psd rank.

\begin{example}
As in \ref{ex:ngon_nonn}, let $S_n$ be the slack matrix of a regular
$n$-gon in the plane. Then by Proposition~ \ref{prop:degree bound},
$\rank_{psd}(S_n)$ grows to infinity as $n$ increases. But as we have
seen before, $\rank(S_n)=3$ for all $n$.
\end{example}

In this section, we have shown that the gap between all pairs of ranks: $\rank$, $\rank_+$ and $\rank_{\textup{psd}}$ can become arbitrarily large for nonnegative matrices. For slack matrices of
polytopes we have given examples where the gaps between $\rank$ and
$\rank_+$, and $\rank$  and $\rank_{\textup{psd}}$, can also grow arbitrarily
large. However, no family of slack matrices are known for which
$\rank_+$ can become arbitrarily bigger than $\rank_{\textup{psd}}$ or at least exponentially bigger. 
Such a family would provide the first concrete proof that semidefinite programming can provide smaller 
representations of polytopes than linear programming.


\section{Applications} \label{sec:applications}

\subsection{Stable set polytopes}

An interesting example of polytopes that arise from combinatorial optimization is that of
stable set polytopes. Let $G$ be a graph with vertices
$V=\{1,\ldots,n\}$ and edge set $E$. A subset $S \subseteq
V$ is {\em stable} if there are no edges between elements in $S$. To
each stable set $S$ we can associate a vector $\chi_S \in \{0,1\}^n$
where $(\chi_S)_i = 1$ if $i \in S$ and $(\chi_S)_i = 0$ otherwise.
The {\em stable set polytope} of the graph
$G$ is the polytope
$$\STAB(G)=\conv\{ \chi_S : S \textrm{ is a stable set of } G\}.$$
Finding the largest stable set in a (possibly vertex-weighted) graph is a classic NP-hard problem in combinatorial optimization that can be formulated as linear optimization over $\STAB(G)$.
The polytopes $\STAB(G)$ give rise to one of the most celebrated results in semidefinite lifts of polytopes.
Recall that a graph is {\em perfect} if the chromatic number of every induced subgraph equals the 
size of its largest clique. 

\begin{theorem}\cite{LovaszSchrijver91} \label{thm:lovasz perfect graph}
Let $G$ be a perfect graph with $n$ vertices, then $\STAB(G)$ has a 
$\PSD^{n+1}$-lift.
\end{theorem}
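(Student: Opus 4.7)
The plan is to exhibit an explicit $\PSD^{n+1}$-lift through the Lov\'asz theta body construction. Let $L \subset \Sym^{n+1}$ be the affine subspace of symmetric matrices (with rows and columns indexed by $\{0,1,\ldots,n\}$) cut out by
\[
Y_{00}=1,\quad Y_{0i}=Y_{ii}\text{ for all }i\in\{1,\ldots,n\},\quad Y_{ij}=0\text{ for all }\{i,j\}\in E(G),
\]
and let $\pi:\Sym^{n+1}\to\RR^n$ be the projection $Y \mapsto (Y_{01},\ldots,Y_{0n})$. I claim $\STAB(G)=\pi(\PSD^{n+1}\cap L)$, so that $\PSD^{n+1}\cap L$ is the desired lift.

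For $\STAB(G)\subseteq\pi(\PSD^{n+1}\cap L)$, given a stable set $S\subseteq V$, set $v_S := (1,\chi_S)\in\RR^{n+1}$. The rank-one matrix $v_S v_S^T$ is positive semidefinite, has $(0,0)$-entry $1$, has both $(0,i)$- and $(i,i)$-entries equal to $(\chi_S)_i$, and has $(i,j)$-entry $(\chi_S)_i(\chi_S)_j=0$ whenever $\{i,j\}\in E$ since $S$ is stable. Hence $v_S v_S^T\in\PSD^{n+1}\cap L$ projects to $\chi_S$, and convexity of the image gives the inclusion. For the reverse inclusion, let $x=\pi(Y)$ with $Y\in\PSD^{n+1}\cap L$. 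Nonnegativity $x_i\geq 0$ follows from $x_i=Y_{ii}$ being a diagonal entry of a PSD matrix. For every clique $K$ of $G$, the principal submatrix of $Y$ on indices $\{0\}\cup K$ has a $1$ at $(0,0)$, entries $x_i$ along the remaining diagonal and in the first row and column, and zeros off-diagonally within $K$; testing its positive semidefiniteness against the vector with $-1$ in the $0$-th coordinate and $1$ on the $K$-coordinates yields $1-\sum_{i\in K}x_i\geq 0$. Thus $\pi(\PSD^{n+1}\cap L)$ sits inside the fractional stable set polytope defined by nonnegativity and the clique inequalities, and the proof finishes by invoking the classical theorem that when $G$ is perfect, this fractional polytope coincides with $\STAB(G)$.

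The main obstacle is the appeal to that perfect-graph characterization --- the deep combinatorial content of Chv\'atal--Fulkerson--Lov\'asz --- which is what actually uses perfectness of $G$. Everything else in the plan is either an explicit description of the lift or a short PSD-positivity computation, so the cone-lift side of the argument is essentially free once the classical fact about $\STAB(G)$ for perfect graphs is in hand. As a sanity check, the construction also fits the framework of Theorem~\ref{thm:general_Yannakakis}: the maps $\chi_S \mapsto v_S v_S^T$ and, for each clique $K$, $K \mapsto w_K w_K^T$ with $w_K := (1,-\chi_K)$, yield $\langle v_S v_S^T, w_K w_K^T\rangle = (1-|S\cap K|)^2 = 1-|S\cap K|$, giving an explicit $\PSD^{n+1}$-factorization of the slack operator associated with the clique facets.
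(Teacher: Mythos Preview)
Your proposal is correct and follows exactly the approach the paper sketches: the paper does not give a formal proof but simply cites \cite{LovaszSchrijver91} and describes the same affine slice of $\PSD^{n+1}$ (the constraints $X_{0,0}=1$, $X_{i,i}=X_{0,i}$, $X_{i,j}=0$ for edges) together with the projection onto the diagonal, attributing the equality with $\STAB(G)$ to Lov\'asz. Your argument fills in precisely the standard details---the rank-one lift of each $\chi_S$, the clique-inequality verification via the principal submatrix, and the appeal to the Chv\'atal--Fulkerson--Lov\'asz characterization of $\STAB(G)$ for perfect graphs---so there is nothing to distinguish in method.
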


The proof is by explicit construction. Suppose $X \in \PSD^{n+1}$ has rows and columns indexed by $0,1,\ldots,n$. Lov{\'a}sz showed that when $G$ is perfect, the cone $\PSD^{n+1}$ sliced by the planes given by $$X_{0,0}=1,  \,\,\,\,X_{i,i}=X_{0,i} \,\,\,\forall \,\,\, i, \,\,\,\, X_{i,j}=0 \,\,\,\forall \,\,\, (i,j) \in E,$$ and projected onto the coordinates $X_{i,i}$ for $i=1,\ldots,n$, is exactly $\STAB(G)$. If $G$ is not perfect this construction offers a convex relaxation of $\STAB(G)$ called the {\em theta body} of $G$.
In \cite{Yannakakis}, Yannakakis showed that 
if $G$ is perfect, $\STAB(G)$ has a $\RR^k_+$-lift where $k = n^{O(\log \,n)}$. 
It is an open problem as to whether $\STAB(G)$, when $G$ is perfect, admits a polyhedral lift of size polynomial in the number of vertices of $G$. Such a result is plausible since one can find a maximum weight stable set in a perfect graph in polynomial time by semidefinite programming over the above lift. On the other hand, it would also be interesting if $\STAB(G)$ does not 
admit a polyhedral lift of size polynomial in $n$ when $G$ is a perfect graph. Such a result would provide the first example of a family of discrete optimization problems where semidefinite lifts are appreciably smaller than polyhedral lifts. In fact, until recently no explicit family of graphs was known for which $\STAB(G)$ does not admit a polyhedral lift of size polynomial in the number of vertices of $G$. In \cite{FMPTW}, the authors construct non-perfect graphs $G$ with $n$ vertices for which $\rank_+(\STAB(G))$ is $2^{\Omega{(n^{1/2}})}$.

In the context of Theorem~\ref{thm:lovasz perfect graph}, a natural 
question is whether there could exist a positive semidefinite lift of the stable set polytope of a perfect graph to some $\PSD^k$ where $k < n+1$.  The next theorem settles this question.

\begin{theorem} 
\label{thm:theta body is optimal}
Let $G$ be any graph with $n$ vertices. Then $\STAB(G)$ does not admit a $\PSD^n$-lift.
\end{theorem}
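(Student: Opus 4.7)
The plan is to suppose that $\STAB(G)$ admits a $\PSD^n$-lift and derive a contradiction by producing $n+1$ linearly independent vectors in $\RR^n$ from a carefully chosen psd factorization. To apply Theorem~\ref{thm:general_Yannakakis} cleanly, I first translate to put the origin in the interior: fix $0 < \varepsilon < 1/n$ and set $C = \STAB(G) - \varepsilon\mathbf{1}$, which is a full-dimensional convex body containing $0$ in its interior. Given a lift $\STAB(G) = \pi(\PSD^n \cap L)$, I build a $\PSD^n$-lift of $C$ as $\tilde\pi(\PSD^n \cap L)$ where $\tilde\pi(X) := \pi(X) - \varepsilon\phi(X)\mathbf{1}$ for any linear functional $\phi$ satisfying $\phi \equiv 1$ on $L$. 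Such a $\phi$ exists because $0 \notin L$ (otherwise $\PSD^n \cap L$ would be a cone and $\pi(\PSD^n \cap L)$ a nontrivial cone, contradicting the boundedness of $\STAB(G)$). Passing to the smallest face of $\PSD^n$ meeting the slice---which is isomorphic to some $\PSD^k$ with $k \le n$---I may assume the lift $C = \tilde\pi(\PSD^k \cap L)$ is proper.

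Theorem~\ref{thm:general_Yannakakis} then furnishes maps $A \colon \ext(C) \to \PSD^k$ and $B \colon \ext(C^\circ) \to \PSD^k$ factoring $S_C$, and from its proof I may take $A(v)$ to be a preimage of $v$ under $\tilde\pi$ inside $\PSD^k \cap L$. I focus on the $n+1$ vertices $v_0 = -\varepsilon\mathbf{1}$ and $v_i = e_i - \varepsilon\mathbf{1}$ ($i = 1, \ldots, n$), coming from the empty and singleton stable sets, together with the $n$ extreme points $y_j = -e_j/\varepsilon \in \ext(C^\circ)$ corresponding to the translated facets $x_j \geq -\varepsilon$ of $C$. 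A direct computation of $S_C(x,y) = 1 - \langle x, y\rangle$ gives
\[
\langle A(v_0), B(y_j)\rangle = 0 \text{ for all } j, \qquad \langle A(v_i), B(y_j)\rangle = \delta_{ij}/\varepsilon \text{ for } i, j \in \{1, \ldots, n\}.
\]
The crucial observation is that $A(v_0) \neq 0$, because $\tilde\pi(A(v_0)) = v_0 = -\varepsilon\mathbf{1} \neq 0$ and $\tilde\pi$ is linear.

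Write $A_i = A(v_i)$ and $B_j = B(y_j)$ in $\PSD^k$. Since $A_i, B_j \succeq 0$, the identity $\langle A_i, B_j\rangle = 0$ forces $A_i B_j = 0$, i.e., $\textup{Im}(A_i) \subseteq \ker(B_j)$---for $i = 0$ and every $j$, and for $i, j \in \{1, \ldots, n\}$ with $i \neq j$. I pick $u_0 \in \textup{Im}(A_0) \setminus \{0\}$ and, for each $i \geq 1$, pick $u_i \in \textup{Im}(A_i)$ with $B_i u_i \neq 0$ (possible because $\langle A_i, B_i\rangle > 0$). Then $B_i u_j = 0$ for all $j \neq i$ in $\{0, 1, \ldots, n\}$, while $B_i u_i \neq 0$ for $i \geq 1$. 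Applying $B_i$ to any relation $\sum_{j=0}^{n} c_j u_j = 0$ yields $c_i B_i u_i = 0$, hence $c_i = 0$ for $i \geq 1$; the remaining $c_0 u_0 = 0$ then forces $c_0 = 0$. Thus $u_0, u_1, \ldots, u_n$ are $n+1$ linearly independent vectors in $\RR^k$, contradicting $k \le n$.

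The main obstacle is the step $A(v_0) \neq 0$: the slack submatrix I single out is essentially the $n \times n$ identity with a row of zeros attached, which has abstract psd rank only $n$ (take $A_0 = 0$ and $A_i = B_i = e_i e_i^T$ for $i \geq 1$). What rules this out is precisely the geometric origin of the factorization, namely that $A(v_0)$ is the preimage of the nonzero vertex $-\varepsilon\mathbf{1}$ of $C$; this is why the translation step and the reference to the proof of Theorem~\ref{thm:general_Yannakakis}, rather than merely its statement, are essential.
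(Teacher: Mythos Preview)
Your proof is correct and follows essentially the same strategy as the paper: isolate the vertices $\emptyset, \{1\},\ldots,\{n\}$ and the nonnegativity facets $x_j\ge 0$, use that $\langle A,B\rangle=0$ with $A,B\succeq 0$ forces $AB=0$, and derive a linear-algebra contradiction in an $n$-dimensional space from the resulting ``identity with an extra zero row'' pattern. Your endgame, picking vectors $u_i\in\textup{Im}(A_i)$ and showing them linearly independent via the $B_i$, is a clean variant of the paper's dimension-count on the column spans of the $B_i$.

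The one genuine difference is how you secure $A_0\neq 0$. You do it geometrically: translate so the origin is interior, extend $\pi$ to a linear $\tilde\pi$ via a functional $\phi$ constant on $L$, and then invoke the construction inside the proof of Theorem~\ref{thm:general_Yannakakis} so that $A(v_0)$ is a preimage of the nonzero vertex $-\varepsilon\mathbf 1$. This works, but it costs you the translation, the existence-of-$\phi$ argument, and reliance on the internals of Theorem~\ref{thm:general_Yannakakis}. The paper instead simply adds one more column to the submatrix, namely any facet not containing the origin (e.g.\ an edge or clique inequality), giving a positive entry $\langle A_0,B_0\rangle$ and hence $A_0\neq 0$ for \emph{any} $\PSD^n$-factorization of the slack matrix. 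That is shorter, uses only the statement of Theorem~\ref{thm:general_Yannakakis_for_polytopes}, and avoids the translation altogether.
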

\begin{proof}
Using Theorem~\ref{thm:general_Yannakakis_for_polytopes} it is enough
to show that the slack matrix of $\STAB(G)$ has no
$\PSD^n$-factorization. Furthermore we may restrict ourselves to a
submatrix of the slack matrix. Consider the subset $V'$ of vertices of $\STAB(G)$ 
consisting of the origin and all the standard basis vectors $e_1$,\ldots,$e_n$.  The set $V'$ is in the 
vertex set of every stable set polytope since the empty set and all
singleton vertices are stable in any graph. Consider also a set of facets $F'$ 
containing some facet that does not touch the origin, and all $n$ facets given by the 
nonnegativities $x_i \geq 0$. The submatrix of the slack matrix whose
rows are indexed by $V'$ and columns by $F'$ has the block structure
$$S'=\begin{pmatrix}
1 & 0_n \\
*_n &I_n 
\end{pmatrix}$$
where $*_n$ is some unknown $n \times 1$ vector, $0_n$ the zero vector
of size $1 \times n$ and $I_n$ the $n \times n$ identity
matrix. Suppose $S'$ has a $\PSD^n$ factorization with 
$A_0,\ldots,A_{n} \in \PSD^n$ associated to rows and $B_0,\ldots,B_n \in \PSD^n$ 
associated to columns. By looking at the
first row of $S'$  we see $\langle A_0, B_i \rangle= 0$ for all $i \geq 1$ which
implies $A_0B_i=0$ for all $i \geq 1$ since all matrices are psd. Therefore,
the columns of each $B_i$ are in the kernel of
$A_0$ for $i\geq 1$. Since $A_0$ is a nonzero $n\times n$ matrix, its kernel has dimension at most $n-1$, and contains all columns of $B_i$ for $i=1,\ldots,n$. By a dimension count we get that all the columns of one
of the $B_i$, say $B_k$, are in the span of the columns of
$B_i$, $i \geq 1$ and $i \not = k$.  Consider now $A_k$. Again, $A_k B_i = 0$ for all $i\geq
1$ and $i \not = k$, which implies that all columns of those $B_i$ are
in the kernel of $A_k$. But this implies that so are the columns of
$B_k$. Therefore, $\langle B_k,A_k \rangle= 0$ which contradicts the structure of $S'$.
\end{proof}

\begin{remark}
\begin{enumerate}
\item In fact, the above proof shows that any polytope in $\RR^n$ that 
 has a vertex that locally looks like a nonnegative
orthant has no $\PSD^n$-lift. Recently, it has been shown \cite{GRT} that 
the psd rank of a $n$-dimensional polytope in $\RR^n$ is at least $n+1$.
\item The result in Theorem~\ref{thm:theta body is optimal} is simple, and yet remarkable in a couple of ways. First, it is an illustration of the usefulness of the factorization theorem (Theorem~\ref{thm:general_Yannakakis}) to prove the optimality of a lift. Secondly,  it is impressive that the simple and natural semidefinite lift proposed by Lov{\'a}sz is optimal in this sense. 
\item Theorem~4.2 in \cite{GPT1} implies that any $n$-dimensional polytope with a $0/1$-slack matrix admits a $\PSD^{n+1}$-lift. A simple proof of this fact follows from Proposition~\ref{prop:squaring} since the rank of a slack matrix of a polytope in $\RR^n$ is at most $n+1$.
\end{enumerate}
\end{remark}

We close this subsection with an interesting class of lifts of stable set
polytopes to completely positive cones. Recall that $\C_{n}^*$ is the cone of $n \times n$ completely positive matrices. 

\begin{theorem} \cite{Burer} \label{thm:Burer}
For any graph $G$ with $n$ vertices, the polytope $\STAB(G)$ has a $\C_{n+1}^*$-lift.
\end{theorem}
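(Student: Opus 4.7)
The plan is to verify Burer's explicit lift: $\STAB(G) = \pi(\C^*_{n+1} \cap L)$, where $L$ is the affine subspace of symmetric $(n+1)\times(n+1)$ matrices $X$ (with rows/columns indexed by $\{0,1,\ldots,n\}$) satisfying $X_{00} = 1$, $X_{0i} = X_{ii}$ for all $i \in \{1,\ldots,n\}$, and $X_{ij} = 0$ for every edge $(i,j) \in E$, and where $\pi(X) = (X_{01},\ldots,X_{0n})$. Establishing this set equality proves the theorem directly and, via Theorem~\ref{thm:general_Yannakakis_for_polytopes}, simultaneously yields an explicit $\C_{n+1}^*$-factorization of the slack matrix of $\STAB(G)$ in which every vertex $\chi_S$ is sent to a rank-one cp matrix.

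For the inclusion $\STAB(G) \subseteq \pi(\C^*_{n+1} \cap L)$, I would map each stable set $S$ to the rank-one completely positive matrix $X_S := \binom{1}{\chi_S}\bigl(1\,\,\chi_S^T\bigr)$. This lies in $\C^*_{n+1}$ because $(1,\chi_S)^T$ is nonnegative, and it lies in $L$ because $(\chi_S)_i(\chi_S)_j=0$ on every edge (stable sets contain no edges) and $(\chi_S)_i^2=(\chi_S)_i$ since entries are $0/1$. Since $\pi(X_S)=\chi_S$, and $\pi(\C^*_{n+1}\cap L)$ is convex, the inclusion extends from the vertices of $\STAB(G)$ to its entire interior.

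The substance of the proof is the reverse inclusion. Given $X \in \C^*_{n+1}\cap L$, decompose $X=\sum_k u_k u_k^T$ with $u_k=(\alpha_k,w_k)^T \geq 0$. The edge constraints $X_{ij}=0$ for $(i,j)\in E$ force $(w_k)_i(w_k)_j=0$ on each edge, so every $w_k$ is supported on a stable set of $G$. The constraint $X_{00}=1$ gives $\sum_k \alpha_k^2=1$. The crucial step is to exploit $X_{0i}=X_{ii}$, which after substitution becomes the identity
\[
\sum_{k:\alpha_k>0} \alpha_k^2\,\tfrac{(w_k)_i}{\alpha_k}\!\left(1-\tfrac{(w_k)_i}{\alpha_k}\right)\;+\sum_{k:\alpha_k=0}(w_k)_i^2\;=\;0.
\]
Since every summand is nonnegative, each must vanish individually: one concludes $w_k=0$ whenever $\alpha_k=0$, and $(w_k)_i/\alpha_k \in \{0,1\}$ whenever $\alpha_k>0$. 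Hence, for each $k$ with $\alpha_k>0$, the vector $z_k:=w_k/\alpha_k$ is the characteristic vector of a stable set, and $\pi(X)_i = X_{0i} = \sum_{k:\alpha_k>0} \alpha_k^2\,(z_k)_i$ exhibits $\pi(X)$ as a convex combination (since $\sum \alpha_k^2 = 1$) of characteristic vectors of stable sets, so $\pi(X) \in \STAB(G)$.

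The main obstacle is not conceptual but this integrality step: coaxing $0/1$ values for $z_k$ out of the purely conic information encoded in a cp decomposition of $X$. Once that is in hand, the rest of the verification is direct, and Theorem~\ref{thm:general_Yannakakis_for_polytopes} supplies the corresponding factorization: vertices $\chi_S \mapsto X_S \in \C^*_{n+1}$, while the facet side of the factorization is given by copositive matrices extracted from the dual description of the affine space $L$ intersected with $\C^*_{n+1}$.
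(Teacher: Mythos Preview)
The paper's proof is a one-line citation to Burer, so you are in effect trying to supply Burer's argument. Your forward inclusion and the overall decomposition strategy are fine, but the ``crucial step'' contains a sign error that breaks the argument. From $X_{0i}=X_{ii}$ one gets
\[
\sum_{k:\alpha_k>0}\alpha_k^2\,\frac{(w_k)_i}{\alpha_k}\Bigl(1-\frac{(w_k)_i}{\alpha_k}\Bigr)\;=\;\sum_{k:\alpha_k=0}(w_k)_i^2,
\]
with the two sums on \emph{opposite} sides, not both on the left summing to zero. The left-hand terms are not a priori nonnegative, since nothing so far forces $(w_k)_i\le\alpha_k$, so you cannot conclude that every summand vanishes. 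A concrete counterexample already appears for $n=1$ with $G$ edgeless and $0<a<1$: the matrix $X=\bigl(\begin{smallmatrix}1&a\\a&a\end{smallmatrix}\bigr)\in\C_2^*\cap L$ admits the cp decomposition $(1,a)(1,a)^T+(0,\sqrt{a-a^2})(0,\sqrt{a-a^2})^T$, in which $\alpha_2=0$ but $w_2\ne 0$ and $w_1/\alpha_1=a\notin\{0,1\}$. So the conclusions you draw fail for a generic cp decomposition.

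What is missing is precisely the boundedness ingredient in Burer's argument. He works with explicit linear equalities $Ax=b$ whose LP relaxation is bounded, together with the lifted constraints $\diag(AXA^T)=b\circ b$; a Cauchy--Schwarz equality step then forces $A w_k=b\,\alpha_k$ for every piece of the cp decomposition. That is what guarantees $w_k=0$ when $\alpha_k=0$ and $(w_k)_i/\alpha_k\in[0,1]$ when $\alpha_k>0$, after which your ``nonnegative terms summing to zero'' argument becomes valid. For $\STAB(G)$ one obtains such equalities by introducing slacks (e.g.\ $x_i+s_i=1$), applies Burer's machinery, and only then recovers the compact $(n{+}1)$-dimensional description. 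Without that preliminary step, the integrality conclusion simply does not follow from the constraints you wrote down.
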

\begin{proof}
This is an immediate consequence of Proposition 3.2 in \cite{Burer}
applied to this problem.
\end{proof}

The $\C_{n+1}^*$-lift of $\STAB(G)$  is given by the same linear constraints on  $X \in \C_{n+1}^*$ that were used to construct
the $\PSD^{n+1}$-lift.
These lifts are of very small size and work for all graphs, but have
limited interest in practical computations since copositive/completely
positive programming is not known to have any efficient algorithms. We illustrate the copositive/completely positive factorization that is expected for this lift in the case of a $5$-cycle. 

\begin{example} \label{ex:Burer}
From Theorem~\ref{thm:Burer} we know that the stable set polytope of a $5$-cycle has a $C_{6}^*$-lift, and hence by Theorem~\ref{thm:general_Yannakakis}, its slack matrix must
have a $C_{6}^*$-factorization. This polytope has $11$ vertices:
the origin, the five standard basis vectors $e_1,\ldots,e_5$ and the five sums $e_1+e_3, e_2+e_4, e_3+e_5, e_4+e_1, e_5+e_2$ corresponding to the five
stable sets of the $5$-cycle with two elements. We will denote these last five vertices by $s_1, \ldots, s_5$, respectively.
Furthermore, there are $11$ facets for this stable set polytope given by the inequalities:
$$x_i \geq 0, \,\,\,\, x_i + x_{i+1} \leq 1,  \,\,\,\forall \,\,\,i=1,\ldots,5, \,\,\mbox{ and } \,\, \sum_{j=1}^5 x_j \leq 2$$ 
where we identify $x_6$ with $x_1$.

Since we know a $\C_6^*$-lift, the $A$ map that takes vertices of $\STAB(G)$ to $\C_{6}^*$ is easy to get.
Send each vertex $v \in \RR^5$ to $A(v)=(1,v)^T (1,v) \in \RR^{6 \times 6}$, and since all
coordinates are nonnegative, $A(v)$ is completely positive. For the copositive lifts of the facets, we go case by case. For $x_i\geq 0$
take the matrix $(0,e_i)^T (0,e_i)$, while for $1-x_i-x_{i+1} \geq 0$ take $(1,-e_i-e_{i+1})^T (1,-e_i-e_{i+1})$. All these matrices are positive semidefinite and hence also copositive. It is also easy to check that they satisfy 
the factorization requirements.

It remains to find a copositive matrix for the odd-cycle inequality $2-\sum_{j=1}^5 x_j \geq 0$. This is non-trivial, but it can be checked that the following matrix works for the factorization:
$$\left(\begin{array}{cccccc}
2  & -1& -1& -1& -1& -1 \\
-1 & 1 &  1 & 0 & 0 & 1 \\
-1 & 1 &  1 &1 & 0 & 0 \\
-1 & 0&  1 & 1 & 1 & 0 \\
-1 & 0 &  0 & 1 & 1 & 1 \\
-1 & 1 &  0 & 0 & 1 & 1 
\end{array}\right).$$
To see that it is copositive, by Theorem 2 in \cite{Burer}, we just have to show that
$$2 \left(\begin{array}{ccccc}
 1 &  1 & 0 & 0 & 1 \\
 1 &  1 &1 & 0 & 0 \\
 0&  1 & 1 & 1 & 0 \\
 0 &  0 & 1 & 1 & 1 \\
 1 &  0 & 0 & 1 & 1 
\end{array}\right) 
-\left(\begin{array}{c}
 -1\\ -1\\ -1\\ -1\\ -1 
\end{array}\right)\left(\begin{array}{c}
 -1\\ -1\\ -1\\ -1\\ -1 
\end{array}\right)^T 
= \left(\begin{array}{ccccc}
 1 &  1 & -1 & -1 & 1 \\
 1 &  1 &1 & -1 & -1 \\
 -1&  1 & 1 & 1 & -1 \\
 -1 &  -1 & 1 & 1 & 1 \\
 1 &  -1 & -1 & 1 & 1 
\end{array}\right) $$
is copositive, and this is a well known Horn form, that is copositive. For a proof, see for instance, 
Lemma 2.1 in \cite{Baston}.
\end{example}

\medskip

\subsection{Rational lifts of algebraic sets}

Our last application is an interpretation of Theorem~\ref{thm:general_Yannakakis} for an important class of positive semidefinite lifts, called {\em rational lifts}, of zero sets of polynomial equations. Suppose we have a system of polynomial equations 
\begin{equation} \label{eqn:poly system}
p_1(x)=p_2(x)=\cdots=p_m(x)=0
\end{equation}
where the $p_i$'s have real coefficients and $n$ variables, and 
$I$ is the {\em ideal} they generate in the polynomial ring $\RR[x] = \RR[x_1,\ldots,x_n]$. The set of zeros of 
(\ref{eqn:poly system}), denoted by $\V_{\RR}(I)$, is the {\em real variety} of the ideal $I$, and we consider positive semidefinite lifts
of $C=\conv(\V_{\RR}(I))$. Since different polynomial systems can generate the same convex hull, we define the {\em convex radical 
ideal} of $I$ to be the ideal $\sqrt[\conv]{I}$ of polynomials vanishing on $\V_{\RR}(I) \cap \ext(C)$. Replacing $I$ by $\sqrt[\conv]{I}$ does not change $C$ and so we will, to simplify arguments, assume that $I = \sqrt[\conv]{I}$.

We consider special kinds of $\PSD^k$-factorizations of the slack operator $S_C$, namely, those where the map $A:\ext(C)\rightarrow \PSD^k$ is of the form $A(x)=v(x)v(x)^T$, where $v(x)$ is a vector of rational functions $v(x) = (v_1(x),\ldots,v_n(x))$. By factoring out the common denominators, 
we can rewrite such a map
as $A(x)= \frac{1}{p(x)^2} w(x)w(x)^T$ where $w(x)$ is a vector of polynomials.
We say that $A$ is a {\em rational map}, and if $p(x)=1$ 
we say that $A$ is a {\em polynomial map}. A $\PSD^k$-factorization of $S_C$ is called a rational (respectively, polynomial) factorization
if the map $A$ used in the factorization is a rational (respectively, polynomial) map.

These lifts turn out to be related to the sums of squares techniques for lift-and-project methods. Given a polynomial $q(x) \in \RR[x]$, we say that it is
a {\em sum of squares (sos) modulo} $I$, if there exist polynomials $h_1(x), \ldots, h_s(x) \in \RR[x]$ 
such that $q(x) - \sum h_i(x)^2 \in I$. If the degrees of all the $h_i$ are bounded above by $k$ we say that $q$ is $k$-{\em sos modulo} $I$. 
This is a sufficient
condition for nonnegativity over a real variety that has been used to construct sequences of semidefinite relaxations of the convex hull of the variety. One such hierarchy is given by the {\em theta bodies} of $I$, introduced in 
\cite{GPT1}.  They are defined geometrically by taking the $k$-th
theta body relaxation of $\conv(\V_\RR(I))$, denoted as $\TH_k(I)$, to
be the intersection of all half-spaces $\{x \,:\, \ell(x) \geq 0\}$ where $\ell(x)$ is a linear polynomial that is $k$-sos modulo $I$.

\begin{theorem} \label{thm:moments}
Let $I$ be a convex radical ideal and $Z=\V_{\RR}(I)$ its zero set such that $\conv(Z)$ is compact and contains the origin. Then,
\begin{enumerate}
\item the slack operator of $\conv(Z)$ has a rational factorization with $A(x)=\frac{1}{p(x)^2} w(x)w(x)^T$ in 
$\PSD^k$ for all $x \in \ext(\conv(Z))$ if 
and only if, for every linear polynomial $\ell(x)$
 nonnegative over $Z$, $p(x)^2 \ell(x)$ is a sum of squares modulo $I$, with all the polynomials in the sum of squares being linear combinations
 of the entries of $w(x).$
\item The slack operator of $\conv(Z)$ has a polynomial factorization with $A(x)= w(x)w(x)^T$ where the degree of each entry in $w$ at most $k$ if and only if $\TH_k(I)=\conv(Z)$.
\end{enumerate}
\end{theorem}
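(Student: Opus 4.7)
The plan is to leverage Theorem~\ref{thm:general_Yannakakis}, which says that $\conv(Z)$ has a $\PSD^k$-lift if and only if its slack operator admits a $\PSD^k$-factorization, and to translate the factorization condition into sum-of-squares identities. Since $\PSD^k$ is self-dual, a factorization consists of maps $A$ and $B$ from the extreme points of $\conv(Z)$ and $\conv(Z)^\circ$ into $\PSD^k$ satisfying $\langle A(x),B(y)\rangle=1-\langle x,y\rangle$. The key observation is that any $B(y)\in \PSD^k$ admits a Cholesky-type decomposition $B(y)=\sum_i b_i(y)b_i(y)^T$, so that $\langle A(x),B(y)\rangle=\sum_i b_i(y)^T A(x)\, b_i(y)$. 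Combined with the fact that $I=\sqrt[\conv]{I}$ exactly means equality on $\ext(\conv(Z))$ coincides with equality modulo $I$, this will let us pivot between factorizations and sos decompositions.

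For part (1), I would first handle the forward direction. If $A(x)=\frac{1}{p(x)^2}w(x)w(x)^T$, the factorization condition becomes
\[
p(x)^2\bigl(1-\langle x,y\rangle\bigr)=\sum_i \bigl(b_i(y)^T w(x)\bigr)^2
\]
on $\ext(\conv(Z))$, hence modulo $I$, for every $y\in \ext(\conv(Z)^\circ)$. A general linear polynomial $\ell(x)\geq 0$ on $Z$, after normalization (using that $0$ is interior to $\conv(Z)$), has the form $1-\langle x,a\rangle$ with $a\in \conv(Z)^\circ$, and writing $a=\sum_j \lambda_j y_j$ with $y_j\in\ext(\conv(Z)^\circ)$ and $\lambda_j\ge 0$ lets us combine the sos representations (absorbing $\sqrt{\lambda_j}$ into the squares) to obtain the required sos decomposition of $p(x)^2\ell(x)$. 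Conversely, given sos representations $p(x)^2(1-\langle x,y\rangle)\equiv \sum_i (c_i(y)^T w(x))^2 \pmod I$ for each $y\in \ext(\conv(Z)^\circ)$, set $B(y):=\sum_i c_i(y)c_i(y)^T\in \PSD^k$; a direct computation then shows that $A$ together with $B$ factors $S_{\conv(Z)}$ (using $p(x)\neq 0$ on $\ext(\conv(Z))$, which is needed for $A$ to be defined in the first place).

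For part (2), I would specialize part (1) to $p(x)=1$. A polynomial factorization with $\deg w_i\leq k$ is then equivalent to every linear $\ell\geq 0$ on $Z$ being $k$-sos modulo $I$. The forward direction of the theta-body statement is immediate: if every supporting linear polynomial of $\conv(Z)$ is $k$-sos modulo $I$, then $\TH_k(I)\subseteq \conv(Z)$, and combined with the standing inclusion $\conv(Z)\subseteq \TH_k(I)$ this gives $\TH_k(I)=\conv(Z)$. For the converse, assume $\TH_k(I)=\conv(Z)$, choose $w(x)$ to be the vector of all monomials of degree $\leq k$, and exhibit, for each $y\in \ext(\conv(Z)^\circ)$, a Gram matrix $B(y)\in \PSD^k$ realizing a $k$-sos decomposition of $\ell_y(x)=1-\langle x,y\rangle$ modulo $I$; part (1) then packages this into a polynomial factorization.

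The main obstacle is precisely this last step. Set-theoretic equality $\TH_k(I)=\conv(Z)$ a priori only guarantees that every linear polynomial nonnegative on $\conv(Z)$ is a limit of $k$-sos linear polynomials modulo $I$, which is not quite enough to build explicit $B(y)$. What rescues the argument, as in the theta-body framework of \cite{GPT1}, is that the convex cone of linear $k$-sos forms modulo $I$ is in fact closed in the finite-dimensional space of linear polynomials (given that $\conv(Z)$ is compact with $0$ in its interior), so the set equality upgrades to a pointwise sos statement and the required Gram matrices can be read off.
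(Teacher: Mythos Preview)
Your proposal is correct and follows essentially the same route as the paper. Both arguments reduce part~(1) to the identity $p(x)^2\ell(x)\equiv w(x)^T B_\ell\, w(x)\pmod{I}$, using that $I=\sqrt[\conv]{I}$ converts equality on $\ext(\conv(Z))$ into equality modulo~$I$, and that a general nonnegative linear $\ell$ is a convex combination of extreme points of $\conv(Z)^\circ$ (so one may extend $B$ linearly, as in Remark~\ref{remark:extension}). The paper is simply terser: it packages your Cholesky step into the single observation that $w(x)^T B_\ell\, w(x)$ is a sum of squares of linear combinations of entries of $w$, and it disposes of the converse with the remark that ``all steps in the proof are actually equivalences.'' For part~(2), the paper invokes \cite{GPT1} directly for the equivalence between $\TH_k(I)=\conv(Z)$ and every nonnegative linear polynomial being $k$-sos modulo~$I$; the closure issue you isolate is exactly what that citation absorbs, so your more explicit treatment is consistent with, but more detailed than, the paper's.
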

\begin{proof}
For the first part note that since any linear polynomial $\ell(x)$
 nonnegative over $Z$ is a convex combination of extreme points of the polar of $\conv(Z)$, there exists a matrix $B_{\ell} \in \PSD^k$ such that
 $\ell(x)= \left<B_{\ell},A(x)\right>$ for all $x \in \ext(\conv(Z)))$. Since $I$ is convex radical this actually implies
 $\ell(x)= \left<B_{\ell},A(x)\right>$ modulo $I$, and by rewriting the right hand side we have
 $p(x)^2\ell(x)= w(x)^T B_{\ell} w(x)$ modulo $I$, which is a sum of squares modulo the ideal with the conditions we want. Since all steps in the
 proof are actually equivalences, this gives us a proof of the first statement.
 
 For the second statement just note that from \cite{GPT1}, $I$ is $\TH_k$-exact if and only if all linear polynomials non-negative over $\V_{\RR}(I)$
 are $k$-sos modulo $I$ (since $I$ is in particular real radical). Now use the first statement to conclude the proof.
\end{proof}

A rational factorization of the slack matrix of $C := \conv(\V_\RR(I))$ consists of two maps $A$ and $B$ that assign psd matrices to extreme points of $C$ and $C^\circ$. On the {\em primal} side, every extreme point (and hence every point) of $C$ is being lifted to a psd matrix  via the map $A$. On the {\em dual} side, $B$ is assigning a psd Gram matrix to every linear functional that is nonnegative on $\V_\RR(I)$ certifying its sum of squares property with respect to this variety.

Several further remarks are in order.  
The requirements that $\conv(Z)$ is compact and contains the origin in its interior are not essential and are
assumed for the sake of simplicity and to keep the discussion in the same setting as in our main theorems.  A similar idea could be applied to convex hulls of sets defined by polynomial inequalities, but there the usual lift is not to a positive semidefinite cone but to a product of such cones, making the notation more cumbersome. Finally, the condition that the ideal $I$ is convex
radical can be avoided if we use a stronger notion of a polynomial lift that implies factorization over the entire variety and not just over the extreme points of the convex hull of the variety.

\bibliographystyle{plain}
\bibliography{all}

\end{document}